\newtheorem{prop}{Proposition}[section]
\newtheorem{thm}[prop]{Theorem}
\newtheorem{lem}[prop]{Lemma}
\theoremstyle{definition}
\newtheorem{rem}[prop]{Remark}
\newcommand{\tb}{\mathtt{tb}}
\newcommand{\lk}{\mathtt{lk}}
\newcommand{\vlk}{\underline{\lk}}
\newcommand{\rot}{\mathtt{rot}}
\newcommand{\vrot}{\underline{\rot}}
\newcommand{\ttt}{\mathtt{t}}
\newcommand{\ttr}{\mathtt{r}}
\newcommand{\rmd}{\mathrm{d}}
\newcommand{\bfx}{\mathbf{x}}
\newcommand{\N}{\mathbb{N}}
\newcommand{\Q}{\mathbb{Q}}
\newcommand{\R}{\mathbb{R}}
\newcommand{\Z}{\mathbb{Z}}
\newcommand{\SL}{\mathrm{SL}}
\newcommand{\xist}{\xi_{\mathrm{st}}}
\newcommand{\xitight}{\xi_{\mathrm{tight}}}
\begin{document}
	
\author[R.~Chatterjee]{Rima Chatterjee}
\author[H.~Geiges]{Hansj\"org Geiges}
\address{Mathematisches Institut, Universit\"at zu K\"oln,
Weyertal 86--90, 50931 K\"oln, Germany}
\email{rchatt@math.uni-koeln.de}
\email{geiges@math.uni-koeln.de}

\author[S.~Onaran]{Sinem Onaran}
\address{Department of Mathematics, Hacettepe University,
06800 Beytepe-Ankara, Turkey}
\email{sonaran@hacettepe.edu.tr}

\thanks{R.~C.\ and H.~G.\ are partially supported by the SFB/TRR 191
`Symplectic Structures
in Geometry, Algebra and Dynamics', funded by the DFG
(Project-ID 281071066 - TRR 191); S.~O.\
is partially supported by T\"UB\.ITAK Grant No.\ 119F411.
R.~C.\ and S.~O.\ would like to thank the Max Planck Institute for
Mathematics in Bonn for its hospitality.}

\title{Legendrian Hopf links in $L(p,1)$}

\date{}

\begin{abstract}
We classify Legendrian realisations, up to coarse equivalence,
of the Hopf link in the lens spaces
$L(p,1)$ with any contact structure.
\end{abstract}

\subjclass[2020]{57K33, 57K10, 57K40, 57R25}

%\keywords{Legendrian knot, Hopf link, lens space, contact structure, contact
%surgery diagram}

\maketitle

%%%%%%%%%%%%%%%%%%%%%%%%%%%%%%%%%%%%%%%%%%%%%%%%%%%%%%%%%%%%%%%%%%%%%%

\section{Introduction}
\label{section:intro}
By the (positive) \emph{Hopf link} $L_0\sqcup L_1$ in the lens space
$L(p,1)$ we mean the (ordered, oriented) link
formed by the two rational unknots given by the spines of the genus~$1$
Heegaard decomposition, oriented in such a way that their
rational linking number equals $1/p$. By the discussion in
\cite[Section~2]{geon15}, this characterisation determines the link up to
isotopy and a simultaneous change of orientations (which can be effected
by an orientation-preserving diffeomorphism of $L(p,1)$);
the key result in the background is the uniqueness of the
genus~$1$ Heegaard splitting up to isotopy.

The lens space $L(p,1)$ with its natural orientation as a quotient
of $S^3$ can be realised by a single $(-p)$-surgery on an unknot. The Hopf link
in this surgery diagram is shown in Figure~\ref{figure:hl};
the link is positive when both $L_0$ and $L_1$ are oriented as meridians of
the surgery curve in the same way. Indeed, if we label meridian and longitude
(given by the Seifert framing) of the surgery curve by $\mu$ and $\lambda$,
the $(-p)$-surgery amounts to replacing a tubular neighbourhood of
the surgery curve by a solid torus $V_1=S^1\times D^2$, with meridian
$\mu_1=\{*\}\times \partial D^2$ and longitude $\lambda_1=S^1\times\{1\}$
glued as follows:
\[ \mu_1\longmapsto p\mu-\lambda,\;\;\;\lambda_1\longmapsto\mu.\]
Then $L_1=\mu=\lambda_1$ may be thought of as the spine of $V_1$,
and $L_0$ as the spine of the complementary solid torus $V_0$,
with meridian $\mu_0=\lambda$ and longitude $\lambda_0=\mu$.

A Seifert surface $\Sigma_0$
for $pL_0$ is made up of the radial surface in $V_0$ between
the $p$-fold covered spine $pL_0$ and the curve
\[ p\lambda_0-\mu_0=p\mu-\lambda=\mu_1,\]
and a (positively oriented) meridional disc in $V_1$. The spine $L_1$
intersects this disc positively in a single point.

\begin{figure}[h]
\labellist
\small\hair 2pt
\pinlabel $L_1$ [br] at 3 21
\pinlabel $-p$ at  27 22
\pinlabel $L_0$ [bl] at 52 21
\pinlabel $K$ at 27 7
\endlabellist
\includegraphics[scale=2]{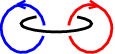}
\caption{The positive Hopf link in $L(p,1)$.}
\label{figure:hl}
\end{figure}

In this paper we extend the classification of Legendrian Hopf links
in $S^3$, see \cite{geon20}, to Legendrian Hopf links in $L(p,1)$
for any $p\in\N$, with any contact structure. For a brief survey
of the rather scant known results on the classification of Legendrian links
(with at least two components) we refer to~\cite{geon20}.
Our result is the first classification of Legendrian links in a
$3$-manifold other than~$S^3$.

Legendrian links in overtwisted contact manifolds are either
\emph{loose}
(the link complement is still overtwisted) or \emph{exceptional}
(the link complement is tight). In the exceptional case, the
link complement may or may not contain Giroux torsion.
In contrast with \cite{geon20}, we only consider the case of vanishing
Giroux torsion (what in \cite{geon20} we called
\emph{strongly exceptional});
the case of Giroux torsion adds numerical complexity
but no significant insight. Beware that the individual components
of an exceptional Legendrian link may well be loose.

Our classification of the Legendrian realisations
(up to coarse equivalence, i.e.\ up to a contactomorphism of the
ambient manifold) of the Hopf link
in $L(p,1)$ is in terms of the rational classical invariants as defined
in~\cite{baet12}.

\begin{lem}
The rational Thurston--Bennequin invariant of the $L_i$, in any contact
structure on $L(p,1)$, is of the form $\tb_{\Q}(L_i)=\ttt_i+\frac{1}{p}$
with $\ttt_i\in\Z$.
\end{lem}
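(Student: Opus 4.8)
The plan is to unwind the definition of the rational Thurston--Bennequin invariant from \cite{baet12} and reduce the computation to an intersection pairing on the boundary torus of a tubular neighbourhood of $L_i$. Recall that for a rationally null-homologous Legendrian knot $L$ of order $r$ --- so that $rL$ bounds a rational Seifert surface $\Sigma$ --- one sets $\tb_{\Q}(L)=\frac{1}{r}\,(C\cdot\Sigma)$, where $C$ is the contact (Legendrian) pushoff of $L$ and the intersection is taken in $L(p,1)$. Since $H_1(L(p,1))=\Z/p$ is generated by $[L_i]$, the order of each $L_i$ is exactly $p$, and the surface $\Sigma_i$ described above --- the radial surface in $V_i$ together with a meridional disc in the complementary solid torus --- is a rational Seifert surface for $pL_i$.

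First I would fix coordinates $(\mu_i,\lambda_i)$ on the boundary torus $\partial N(L_i)=\partial V_i$, where $\mu_i$ is the meridian and $\lambda_i$ the longitude given by the spine framing, normalised so that $\mu_i\cdot\lambda_i=1$. From the construction of $\Sigma_i$, its intersection with $\partial N(L_i)$ is the curve $c_i=p\lambda_i-\mu_i$: for $i=0$ this is exactly the curve $p\lambda_0-\mu_0=\mu_1$ appearing above, and for $i=1$ one checks symmetrically that $p\lambda_1-\mu_1=\mu_0$. Next, since the contact framing of a Legendrian knot is an honest (integral) framing, the contact pushoff $C$ is isotopic on $\partial N(L_i)$ to a curve $\lambda_i+\ttt_i\mu_i$ for a unique integer $\ttt_i\in\Z$, namely the difference between the contact framing and the spine framing $\lambda_i$.

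It then remains to evaluate $C\cdot\Sigma_i$. Because $\Sigma_i$ meets $\partial N(L_i)$ precisely along $c_i$, this intersection number equals the algebraic intersection of $C$ with $c_i$ computed on the torus:
\[ C\cdot\Sigma_i=(\lambda_i+\ttt_i\mu_i)\cdot(p\lambda_i-\mu_i)=1+p\,\ttt_i. \]
Dividing by the order $p$ yields $\tb_{\Q}(L_i)=\ttt_i+\frac{1}{p}$, as claimed, with the argument being identical for $i=0$ and $i=1$ by the symmetry of the two boundary slopes noted above.

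The computation itself is short; the points requiring care are the bookkeeping of orientations and framing conventions --- in particular fixing the sign of $\mu_i\cdot\lambda_i$ consistently with the definition of $\tb_{\Q}$ in \cite{baet12}, so that the term $+\frac{1}{p}$ (rather than $-\frac{1}{p}$) appears. The main thing to get right is the reduction of $C\cdot\Sigma_i$ to the torus pairing with $c_i$: one pushes $C$ off $\partial N(L_i)$ into the complement and uses that $\Sigma_i$ has a product collar $c_i\times[0,1)$ there, so that the only contributions to the intersection come from the points where $C$ crosses $c_i$.
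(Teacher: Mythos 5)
Your proof is correct and essentially the same as the paper's: both write the contact framing as $\ttt_i\mu_i+\lambda_i$, intersect it with the rational Seifert surface for $pL_i$ (radial surface plus meridional disc) to get $p\ttt_i+1$, and divide by the order~$p$. The only cosmetic difference is that you evaluate $C\cdot\Sigma_i$ as the torus pairing with the boundary slope $p\lambda_i-\mu_i$, whereas the paper transports the framing curve into $(\mu_1,\lambda_1)$-coordinates and counts passages through the meridional disc in $V_1$ --- the same count.
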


\begin{proof}
By symmetry it suffices to show this for $L_0$, where we can use
the above description of a Seifert surface $\Sigma_0$ for $pL_0$.
The contact framing of $L_0$ is given by a curve $\ttt_0\mu_0+\lambda_0$
with $\ttt_0\in\Z$. With the identifications in the surgery
description of $L(p,1)$ we have
\[ \ttt_0\mu_0+\lambda_0=\ttt_0\lambda+\mu=
\ttt_0(p\lambda_1-\mu_1)+\lambda_1.\]
We can push this curve on $\partial V_1$ a little into $V_1$,
and then the intersection number with the meridional disc bounded by
$\mu_1$ (as part of~$\Sigma_0$) equals $\ttt_0p+1$. To obtain
$\tb_{\Q}(L_0)$, this number has to be divided by~$p$.
\end{proof}

The rational rotation
number $\rot_{\Q}$ is well defined (i.e.\ independent of a choice
of rational Seifert surface), since the Euler class of any
contact structure on $L(p,1)$ is a torsion class.

We also include the $d_3$-invariant of
the overtwisted contact structures. This invariant does not play a direct
role in the classification, but in some cases we need it to determine
whether the link components are loose or exceptional, when we appeal to
the classification of exceptional Legendrian rational unknots in $L(p,1)$
achieved in~\cite{geon15}. The notation $\xi_d$ stands
for an overtwisted contact structure with $d_3=d$. For a complete homotopical
classification of the overtwisted contact structures one also needs
to know their Euler class (or, in the presence of $2$-torsion, a finer
$d_2$-invariant). The Euler class can be computed from
the surgery diagrams we present, using the recipe of
\cite{chke}. These computations, which are rather involved, are omitted here.
In the cases where we use contact cuts to find the Legendrian realisations,
the homotopical classification of the contact structure in question is
more straightforward.

We use the notation $\xitight$ for any one of the tight contact
structures on $L(p,1)$. The homotopical data of the relevant
contact structures containing Legendrian realisations of the Hopf
link can easily be read off from Figure~\ref{figure:case-a}. These tight
structures are Stein fillable and hence have zero Giroux torsion.

The essence of the following main theorem is that the classical invariants
suffice to classify the Legendrian realisations of the Hopf link in
$L(p,1)$, so the Hopf link is what is called Legendrian simple.

\begin{thm}
\label{thm:main}
Up to coarse equivalence, the Legendrian realisations of the
Hopf link in $L(p,1), p\geq 2$, with zero Giroux torsion in
the complement, are as follows. In all cases, the classical
invariants determine the Legendrian realisation.
\begin{itemize}
\item[(a)] In $(L(p,1),\xitight)$ there is a unique realisation for any
combination of classical invariants $(\tb_\Q(L_i),\rot_\Q(L_i))=
(\ttt_i+\frac{1}{p},\ttr_i-\frac{\ttr}{p})$,
$i=0,1$, in the range $\ttt_0,\ttt_1<0$ and
\[\ttr\in\{-p+2,-p+4,\ldots,p-4,p-2\},\]
\[ \ttr_i\in\{\ttt_i+1,\ttt_i+3,\ldots,-\ttt_i-3,-\ttt_i-1\}.\]
For fixed values of $\ttt_0,\ttt_1<0$ this gives a total
of $\ttt_0\ttt_1(p-1)$ realisations.
\item[(b)] For $\ttt_0=0$ and $\ttt_1\leq 0$ there are
$|\ttt_1-1|$ exceptional realisations, all living in
$(L(p,1),\xi_d)$ with $d=(3-p)/4$, made up of an exceptional
component $L_0$ with classical invariants $(\tb_\Q(L_0),\rot_\Q(L_0))=
(\frac{1}{p}, 0)$, and a loose component $L_1$ with
invariants $(\tb_\Q(L_1),\rot_\Q(L_1))=(\ttt_1+\frac{1}{p},\ttr_1)$, where
\[ \ttr_1\in\{\ttt_1,\ttt_1+2,\ldots,-\ttt_1-2,-\ttt_1\}.\]
\item[(c)] For $\ttt_0=0$ and $\ttt_1>0$ the exceptional
realisations are as follows; in all cases both components are loose.
\begin{itemize}
\item[(c1)] For $\ttt_1=1$ there are two realisations,
with classical invariants
\[ (\tb_{\Q}(L_0),\rot_{\Q}(L_0))=\Bigl(\frac{1}{p},\pm\frac{2}{p}\Bigr)\]
and
\[ (\tb_{\Q}(L_1),\rot_{\Q}(L_1))=\Bigl(1+\frac{1}{p},
\pm\Bigl(1+\frac{2}{p}\Bigr)\Bigr).\]
They live in $(L(p,1),\xi_d)$ with $d=(3p-p^2-4)/4p$.
\item[(c2)] For $\ttt_1=2$, there are three exceptional
realisations. Two of them live in $(L(p,1),\xi_d)$,
where $d=\frac{3p-p^2-4}{4p}$, and have classical invariants
$(\tb_\Q(L_0),\rot_\Q(L_0))$ as in (c1)
and
\[ (\tb_\Q(L_1),\rot_\Q(L_1))=\Bigl(2+\frac{1}{p},\pm\Bigl(2+\frac{2}{p})
\Bigr)\Bigr).\]
The third one lives in  $(L(p,1),\xi_d)$ with $d=\frac{7-p}{4}$,
and the invariants are $(\tb_\Q(L_0),\rot_\Q(L_0))=(\frac{1}{p},0)$
and $(\tb_\Q(L_1),\rot_\Q(L_1))=(2+\frac{1}{p},0)$.			
\item[(c3)] For $\ttt_1>2$, there are four exceptional
realisations. The classical invariants are listed in Table
\ref{table:c3} in Section~\ref{section:exceptional}.
\end{itemize}	
\item[(d)] For $\ttt_0,\ttt_1>0$ the exceptional
realisations, with both components loose, are as follows:
\begin{itemize}
\item[(d1)] For $\ttt_0=\ttt_1=1$, there are exactly $p+3$
exceptional realisations. They live in $(L(p,1),\xi_d)$,
where $d=\frac{7p-\ttr^2}{4p}$. They have classical invariants
\[ (\tb_{\Q}(L_0),\rot_{\Q}(L_0))=
\Bigl(1+\frac{1}{p},\frac{\ttr}{p}\Bigr)\]
and
\[ (\tb_\mathbb{Q}(L_1),\rot_\mathbb{Q}(L_1))
=\Bigl(1+\frac{1}{p},\frac{\ttr}{p}\Bigr),\]
where
\[\ttr\in\{-p-2,-p,\ldots, p,p+2\}.\]
\item[(d2)] For $\ttt_0=1$ and $\ttt_1>1$, there are $2(p+2)$ exceptional
realisations, whose classical invariants are given in Table \ref{table:d2}.
\item[(d3)]For $\ttt_0>1$ and $\ttt_1>1$, there are $4(p+1)$ exceptional
realisations, whose classical invariants are given in
Table~\ref{table:d3}.
\end{itemize}	
\item[(e)] For $\ttt_0<0$ and $\ttt_1>0$ the exceptional
realisations are as follows. Here $L_0$ is loose; $L_1$ is exceptional.
\begin{itemize}
\item[(e1)] For $\ttt_1=1$, there are exactly
$|\ttt_0|(p+1)$ exceptional realisations. They live in
$(L(p,1),\xi_d)$ where $d=(3p-\ttr^2)/4p$, where
\[\ttr\in\{-p, -p+2,\ldots, p-2, p\}.\]
The classical invariants are
\[ (\tb_{\Q}(L_0),\rot_{\Q}(L_0))=
\Bigl(\ttt_0+\frac{1}{p},\ttr_0-\frac{\ttr}{p}\Bigr)\]
and
\[ (\tb_\mathbb{Q}(L_1),\rot_\mathbb{Q}(L_1))=
\Bigl(1+\frac{1}{p},-\frac{\ttr}{p}\Bigr),\]
where 
\[\ttr_0\in\{\ttt_0+1,\ttt_0+3,\ldots, -\ttt_0-3, -\ttt_0-1\}.\]
\item[(e2)] For $\ttt_0<0$ and $\ttt_1>1$, there are $2|\ttt_0|p$ exceptional
realisations, whose classical invariants are given in
Section~\ref{subsubsection:e2}.
\end{itemize}
\end{itemize}		
\end{thm}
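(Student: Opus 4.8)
The plan is to exploit the fact that the complement of the Hopf link is a thickened torus: removing open standard neighbourhoods $N_0,N_1$ of Legendrian realisations of $L_0$ and $L_1$ leaves a manifold diffeomorphic to $T^2\times[0,1]$, since $L_0\sqcup L_1$ is the union of the two spines of the genus~$1$ Heegaard splitting. In every case of the theorem this complement carries a tight contact structure---in case~(a) because the ambient structure $\xitight$ is tight, in cases~(b)--(e) because an exceptional link has tight complement by definition---and the hypothesis of zero Giroux torsion makes it minimally twisting. I would first arrange the two boundary tori $\partial N_i$ to be convex with exactly two dividing curves; by the Lemma the dividing slope on $\partial N_i$ is pinned down by $\tb_\Q(L_i)=\ttt_i+\frac1p$, that is, by the contact framing $\ttt_i\mu_i+\lambda_i$. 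The classification then reduces to Honda's classification of minimally twisting tight contact structures on $T^2\times[0,1]$ with prescribed boundary slopes, whose finite list is encoded by the path between the two slopes in the Farey tessellation together with a sign attached to each elementary bypass layer.

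For the existence half I would produce explicit Legendrian realisations. Starting from the surgery diagram in Figure~\ref{figure:hl}, one draws the surgery curve $K$ as a stabilised Legendrian unknot and the two components $L_0,L_1$ as Legendrian meridians of $K$, then stabilises each component the requisite number of times to hit the prescribed values of $\tb_\Q$ and $\rot_\Q$; the two signs of stabilisation account for the two signs in each symmetric range of rotation numbers. In the cases where the ambient manifold is overtwisted I would instead build the realisations by contact cuts, which simultaneously makes the homotopical classification transparent---in particular the value of $d_3$, and hence the label $\xi_d$---as in the discussion preceding the theorem, the remaining Euler-class data being supplied by the recipe of~\cite{chke}.

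For uniqueness, once the boundary slopes are fixed Honda's count gives the number of candidate tight structures on $T^2\times[0,1]$, and I would compute $\rot_\Q(L_0)$ and $\rot_\Q(L_1)$ for each candidate from the relative Euler class, evaluated against the rational Seifert surfaces for $pL_i$ described before the Lemma. Matching these computations against the sign choices in the bypass decomposition should reproduce exactly the ranges of $\ttr,\ttr_0,\ttr_1$ listed in each case and show that distinct sign patterns yield distinct pairs of rotation numbers, which is the Legendrian simplicity asserted by the theorem. The explicit counts, such as $\ttt_0\ttt_1(p-1)$ in~(a) or $p+3$ in~(d1), should emerge as the number of admissible sign patterns, the $p$-dependence reflecting the additional values available to the rational rotation number that are absent in the integral $S^3$ case of~\cite{geon20}.

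The main obstacle I expect lies in the cases with $\ttt_i\ge0$, where the ambient structure is overtwisted and something beyond the $T^2\times[0,1]$ analysis is required. Here one must, on the one hand, pin down the homotopy type of each realisation in order to identify the overtwisted structure $\xi_d$ in which it lives, and, on the other hand and more delicately, decide for each component whether it is loose or exceptional. The latter I would test by examining the individual complement $L(p,1)\setminus N_i$, obtained by gluing the solid torus $N_{1-i}$ back into $T^2\times[0,1]$: the component $L_i$ is exceptional exactly when this gluing remains tight, which can be checked against the classification of exceptional Legendrian rational unknots in $L(p,1)$ from~\cite{geon15}, while the loose components are handled by the classification of loose knots. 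A further subtlety is that for $\ttt_i\ge0$ a standard neighbourhood need not realise the expected boundary slope, so $N_i$ may first have to be thickened before Honda's gluing applies. Keeping the rational rotation numbers, the Farey combinatorics, and the loose/exceptional dichotomy mutually consistent through the sub-cases (c1)--(c3), (d1)--(d3) and (e1)--(e2)---each with its own count and its own value of $d_3$---is the genuinely laborious part of the argument.
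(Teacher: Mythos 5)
Your skeleton coincides with the paper's: the upper bound via Honda's classification \cite{hond00I} of tight minimally twisting structures on $T^2\times[0,1]$ with boundary slopes $1/\ttt_0$ and $-p-1/\ttt_1$, explicit realisations to saturate that bound, and the loose/exceptional dichotomy settled by comparison with the classification of exceptional rational unknots in \cite{geon15}. The genuine gap is in your existence step, where you propose to construct \emph{all} realisations in overtwisted structures by contact cuts. The paper does the opposite: contact cuts are used only for case (c1), precisely because a contact surgery presentation eludes the authors there, while every other overtwisted case --- (b), (c2), (c3), (d1)--(d3), (e1), (e2) --- is realised by an explicit contact $(\pm 1)$-surgery diagram, with the classical invariants computed from the linking-matrix formulas and tightness of the link complement established by the cancellation lemma (contact $(-1)$-surgeries along $L_0$, $L_1$ and their push-offs cancel all $(+1)$-surgeries, leaving a Stein fillable manifold). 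Your cut-based plan cannot deliver those cases. In the cut picture the contact structure is one of the $S^1$-invariant forms $\alpha_{\ell}$, and Legendrian realisations naturally arise as leaves of the characteristic foliation on the pre-Lagrangian tori $\{z=\mathrm{const}\}$, so their $\tb_\Q$ is dictated by the amount of twisting available: $\ker\alpha_2$ twists by little more than $2\pi$, which is exactly enough for the single link of case (c1) (whence $\ttt_0=0$, $\ttt_1=1$ there). To reach the larger values of $\ttt_i$ occurring in the other cases one would need $\alpha_{\ell}$ with $\ell\geq 3$, but then the link complement contains an embedded thickened torus with $\pi$-twisting, i.e.\ Giroux torsion, violating the standing hypothesis of the theorem. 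Moreover, the ranges of rotation numbers in cases (b), (d2), (d3), (e1), (e2) come from choices of stabilisation of Legendrian knots in a surgery diagram; the rigid $S^1$-invariant cut picture offers no analogue of these choices, and you provide no substitute.

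Two smaller points. First, you appeal to ``the classification of loose knots'' to handle loose components; the paper never needs this --- looseness is detected negatively, by checking that the invariants (or, in two cases, the $d_3$-invariant of the ambient structure) fail to match any exceptional realisation listed in \cite{geon15}, and exceptionality of a component is established positively by performing cancelling contact surgeries on it. Second, your uniqueness argument via relative Euler classes of the bypass layers is plausible but unnecessary extra work: once one exhibits, for each pair $(\ttt_0,\ttt_1)$, exactly $N(\ttt_0,\ttt_1)$ realisations with pairwise distinct classical invariants, the upper bound forces these to exhaust all realisations and simultaneously shows that the classical invariants classify them; this is how the paper closes the argument, and it avoids having to match individual tight structures on the complement to individual realisations.
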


The proof of Theorem~\ref{thm:main} largely follows the strategy
used in~\cite{geon20} for Legendrian Hopf links in~$S^3$:
find an upper bound on the number of exceptional realisations by enumerating
the tight contact structures on the link complement, and then
show that this bound is attained by giving explicit realisations.

Most of these explicit realisations are in terms
of surgery diagrams, but as in \cite{geon20} there is a
case where a surgery presentation eludes us, and we have to use
contact cuts instead. This case (c1), which is being treated in
Section~\ref{section:cut}, contains most of the conceptually novel aspects
in the present paper. In contrast with \cite{geon20}, we no longer
have a global frame for the contact structure; therefore, the computation
of rotation numbers requires the explicit description of rational
Seifert surfaces, and frames over them, in the context of topological cuts.
The discussion in Section~\ref{section:cut} should prove
useful in a more general analysis of the contact topology of lens
spaces via contact cuts.

\begin{rem}
One has to be a little careful when comparing this result with
the classification of Legendrian Hopf links in $S^3$ in~\cite{geon20}.
For instance, in case (c1), the contact cut description we use
in Section~\ref{section:cut}
corresponds for $p=1$ to the interpretation of $S^3$ as lens space
$L(1,1)$, whereas in \cite{geon20} we read $S^3$ as $L(1,0)$. In case~(a),
the surgery diagram for $S^3$ is empty, and the discussion in the present
paper only makes sense for $p\geq 2$. In most other cases, however, one
obtains the correct results for $S^3$ by allowing $p=1$ in
Theorem~\ref{thm:main}.
\end{rem}
\section{Upper bound for exceptional realisations}
In this section we determine the number of tight contact structures on the
complement of a Legendrian Hopf link $L_0\sqcup L_1$ in
$L(p,1)$, in terms of the
Thurston--Bennequin invariant of the link components.
We start with $S^3$, decomposed into
two solid tori $V_0,V_1$ forming a Hopf link (in the traditional
sense), and a thickened torus $T^2\times[0,1]$, i.e.\
\[ S^3=V_0\cup_{\partial V_0=T^2\times\{0\}} T^2\times [0,1]
\cup_{T^2\times\{1\}=\partial V_1} V_1.\]
Write $\mu_i,\lambda_i$ for meridian and longitude
on $\partial V_i$, and define the gluing in the decomposition above by
\begin{eqnarray*}
	\mu_0     & = & S^1\times\{*\}\times\{0\},\\
	\lambda_0 & = & \{*\}\times S^1\times\{0\},\\
	\mu_1     & = & \{*\}\times S^1\times\{1\},\\
	\lambda_1 & = & S^1\times\{*\}\times\{1\}.
\end{eqnarray*} 

As in the introduction, we think of the Hopf link in $L(p,1)$
as being obtained by $(-p)$-surgery along the spine of~$V_1$.
Slightly changing the notation from the introduction, we write
$\mu_1',\lambda_1'$ for meridian and longitude of the solid torus
$V_1'$ reglued in place of $V_1$, so that the gluing
prescription becomes
\[ \mu_1'\longmapsto p\mu_1-\lambda_1,\;\;\;\lambda_1'\longmapsto \mu_1.\]

Given a Legendrian Hopf link $L_0\sqcup L_1$ in $L(p,1)$
with $\tb_{\Q}(L_i)=\ttt_i+\frac{1}{p}$, we can choose $V_0,V_1'$
(\emph{sic}!) as standard neighbourhoods of $L_0,L_1$, respectively.
This means that $\partial V_0$ is a convex surface with two dividing
curves of slope $1/\ttt_0$ with respect to $(\mu_0,\lambda_0)$;
the slope of $\partial V_1'$ is $1/\ttt_1$ with respect to
$(\mu_1',\lambda_1')$.

Now, on $T^2\times[0,1]$ we measure slopes on the $T^2$-factor with
respect to $(\mu_0,\lambda_0)$.
So we are dealing with a contact structure
on $T^2\times[0,1]$ with convex boundary, two dividing curves
on either boundary component, of slope $s_0=1/\ttt_0$ on
$T^2\times\{0\}$, and of slope $s_1=-p-1/\ttt_1$ on $T^2\times\{1\}$, since
\[ \ttt_1\mu_1'+\lambda_1'=\ttt_1(p\lambda_0-\mu_0)+\lambda_0=
(\ttt_1p+1)\lambda_0-\ttt_1\mu_0.\]
Recall that a contact structure on $T^2\times[0,1]$
with these boundary conditions is called \emph{minimally twisting}
if every convex torus parallel to the boundary has slope between $s_1$
and~$s_0$.

The following proposition covers all possible pairs $(\ttt_0,\ttt_1)$,
possibly after exchanging the roles of $L_0$ and $L_1$.

\begin{prop}
\label{prop:complement}
Up to an isotopy fixing the boundary, the number $N=N(\ttt_0,\ttt_1)$
of tight, minimally twisting contact structures
on $T^2\times[0,1]$ with convex boundary, two dividing curves
on either boundary component of slope $s_0=1/\ttt_0$ and
$s_1=-p-1/\ttt_1$, respectively, is as follows.
\begin{itemize}
\item[(a)] If $\ttt_0,\ttt_1<0$, we have $N=\ttt_0\ttt_1(p-1)$.
\item[(b)] If $\ttt_0=0$ and $\ttt_1\leq 0$, then $N=|\ttt_1-1|$.
\item[(c)] If $\ttt_0=0$, $\ttt_1\geq 1$:
\begin{itemize}
\item[(c1)] $N(0,1)=2$.
\item[(c2)] $N(0,2)=3$.
\item[(c3)] For all $\ttt_1>2$, we have $N(0,\ttt_1)=4$.
\end{itemize}
\item[(d)] If $\ttt_0,\ttt_1>0$:
\begin{itemize}
\item[(d1)] $N(1,1)=p+3$.
\item[(d2)] For all $\ttt_1>1$, we have $N(1,\ttt_1)=2(p+2)$.
\item[(d3)] For all $\ttt_0,\ttt_1>1$, we have $N=4(p+1)$.
\end{itemize}	
\item[(e)] If $\ttt_0<0$, $\ttt_1>0$:
\begin{itemize}
\item[(e1)] For all $\ttt_0<0$ and $\ttt_1>1$, we have
$N=2|\ttt_0|p$.
\item[(e2)] For all $\ttt_0<0$, we have
$N(\ttt_0,1)=|\ttt_0|(p+1)$.
\end{itemize}
\end{itemize}
\end{prop}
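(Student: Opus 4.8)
The plan is to reduce the entire count to Honda's classification of tight contact structures on the toric annulus, which expresses the number of minimally twisting structures with prescribed boundary slopes as a continued-fraction product. First I would apply a coordinate change in $\SL(2,\Z)$ to the $T^2$-factor so as to bring the pair $(s_0,s_1)$ into standard position, sending one boundary slope to $-1$ and the other to some $-\frac{a}{b}<-1$. Writing the continued fraction $-\frac{a}{b}=[r_0,r_1,\dots,r_m]$ with all $r_i\le-2$, Honda's formula gives the count as $|(r_0+1)(r_1+1)\cdots(r_{m-1}+1)\,r_m|$, and the minimal-twisting hypothesis is precisely what guarantees that the relevant path in the Farey tessellation is the short, monotone one, so that no extra structures arise from encircling.

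Equivalently, and more usefully for the bookkeeping, I would read the count off the Farey path from $s_1=-p-\frac{1}{\ttt_1}$ up to $s_0=\frac{1}{\ttt_0}$ directly. This path decomposes into three consecutive blocks: a tail near $s_1$, a central run through the integer slopes, and a tail near $s_0$. Within a single straight run of $\ell$ edges the basic slices may be shuffled and contribute a factor $\ell+1$, whereas at a genuine corner between two blocks the adjacent basic slices have independent signs, so the block contributions multiply. The two tails, approaching $\frac{1}{\ttt_0}=-\frac{1}{|\ttt_0|}$ and the fraction $-p+\frac{1}{|\ttt_1|}$ just above $-p$, are straight runs of lengths $|\ttt_0|-1$ and $|\ttt_1|-1$, hence contribute $|\ttt_0|$ and $|\ttt_1|$; the central run through consecutive integers contributes the $p$-dependent factor. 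In case~(a) the central integers are $-(p-1),\dots,-1$, a run of $p-2$ edges contributing $p-1$, and one obtains $N=|\ttt_0|\,|\ttt_1|\,(p-1)=\ttt_0\ttt_1(p-1)$.

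I would then run through the sign patterns of $(\ttt_0,\ttt_1)$ as in the statement, using the symmetry $\ttt_0\leftrightarrow\ttt_1$ (an orientation-preserving self-diffeomorphism of $T^2\times[0,1]$ swapping the two boundary tori) to reduce the number of regimes. The degenerate values $\ttt_i\in\{0,\pm1\}$ have to be inspected one at a time: for $\ttt_0=0$ the upper slope is $\infty$, while for $\ttt_i=\pm1$ the corresponding fraction is an integer, so that the associated tail collapses and is absorbed into the central run. When $\ttt_0>0$ the upper slope $\frac{1}{\ttt_0}$ is positive, so the path must climb past $0$, lengthening the central portion; this is exactly what replaces the factor $p-1$ of case~(a) by $p+3$ in case~(d1), where $s_0=1$ and $s_1=-(p+1)$ give a single straight run of $p+2$ edges, and by the intermediate values $p+1$, $p+2$ in the remaining cases.

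The main obstacle will be the precise endpoint bookkeeping that fixes these additive constants and the multiplicative factors $2$ and $4$ appearing in cases~(c), (d2), (d3), (e1). Concretely, I must determine at each junction between the integer run and a tail whether the path genuinely corners (so that the signs are independent and the factors multiply) or continues straight (so that shuffling applies), and I must pin down how the path enters and leaves the integer lattice near $s_0$ and $s_1$; this is where the difference between $p-1$, $p+1$, $p+2$, $p+3$, and between $2(p+2)$ and $4(p+1)$, originates. Each regime then reduces to writing out the normalised continued fraction explicitly and applying Honda's product formula, with the small cases $N(0,1)=2$ and $N(0,2)=3$ checked by hand.
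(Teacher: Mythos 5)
Your proposal is correct and follows essentially the same route as the paper: normalise the boundary slopes by an $\SL(2,\Z)$ transformation to $s_0'=-1$, $s_1'\leq -1$, and count via Honda's product formula $N=|(r_0+1)\cdots(r_{k-1}+1)r_k|$ applied to the continued fraction expansion of $s_1'$, case by case — your Farey-path/basic-slice bookkeeping is an equivalent reformulation of that computation, and your spot-checks agree with the paper's expansions (e.g.\ $[\ttt_0-1,-p,\ttt_1]$ in case (a) and $[-2,\ldots,-2,-(p+3)]$ in case (d1)). The junction analysis you defer is exactly what the paper carries out by exhibiting explicit normalising matrices and writing out the resulting continued fractions in each regime.
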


\begin{proof}
So that we can use the classification of tight contact structures on
$T^2\times [0,1]$ due to Giroux~\cite{giro00} and Honda~\cite{hond00I},
we normalise the slopes by applying
an element of $\mathrm{Diff}^+(T^2)\cong\SL(2,\Z)$ to $T^2\times[0,1]$
such that the slope on $T^2\times\{0\}$ becomes $s_0'=-1$,
and on $T^2\times\{1\}$ we have $s_1'\leq -1$. If $s_1'<-1$,
the number $N$ is found from a continuous fraction expansion
\[ s_1'=r_0-\cfrac{1}{r_1-\cfrac{1}{r_2-\cdots-\cfrac{1}{r_k}}}
=:[r_0,\ldots,r_k]\]
with all $r_i<-1$ as
\begin{equation}
\label{eqn:N}
N=|(r_0+1)\cdots(r_{k-1}+1)r_k|,
\end{equation}
see~\cite[Theorem~2.2(2)]{hond00I}.
The vector $\begin{pmatrix}x\\y\end{pmatrix}$ stands for
the curve $x\mu_0+y\lambda_0$, with slope $y/x$.
\subsection{Case (a)}
We have 
\[ \begin{pmatrix}
0 & -1\\
1 & -\ttt_0+1 
\end{pmatrix}
\begin{pmatrix}
\ttt_0\\
1
\end{pmatrix}
=\begin{pmatrix}
-1\\
1
\end{pmatrix}\]
and 
\[ \begin{pmatrix}
0 & -1\\
1 & -\ttt_0+1 
\end{pmatrix}
\begin{pmatrix}
-\ttt_1\\
p\ttt_1+1
\end{pmatrix}
=\begin{pmatrix}
-p\ttt_1-1\\
-\ttt_1-p\ttt_0\ttt_1+p\ttt_1-\ttt_0+1
\end{pmatrix},\]
which implies
\[s_1'=\ttt_0-1+\frac{\ttt_1}{p\ttt_1+1}=
\ttt_0-1-\cfrac{1}{-p-\cfrac{1}{\ttt_1}}.\]
For $\ttt_1<-1$ we read this as $[\ttt_0-1,-p,\ttt_1]$; for
$\ttt_1=-1$ and $p>2$, as $[\ttt_0-1,-p+1]$; for $\ttt_1=-1$
and $p=2$, as $[\ttt_0]$.
In all three cases this gives  $N=|\ttt_0\ttt_1(p-1)|$.
\subsection{Case (b)}
For $\ttt_0=0$ and $\ttt_1\leq 0$ we use the transformation
\[ \begin{pmatrix}
-p & -1\\
p+1 & 1 
\end{pmatrix}
\begin{pmatrix}
0\\
1
\end{pmatrix}
=\begin{pmatrix}
-1\\
1
\end{pmatrix}\]
and
\[
\begin{pmatrix}
-p & -1\\
p+1 & 1 
\end{pmatrix}
\begin{pmatrix}
-\ttt_1\\
p\ttt_1+1
\end{pmatrix}
=\begin{pmatrix}
-1\\
1-\ttt_1
\end{pmatrix}.\]
This gives $s_1'=-1+\ttt_1$, whence $N=|\ttt_1-1|$.
\subsection{Case (c)} For $t_0=0$ and $t_1\geq 2$ we have
\[ \begin{pmatrix}
-(p+1) & -1\\
p+2 & 1 
\end{pmatrix}
\begin{pmatrix}
0\\
1
\end{pmatrix}
=\begin{pmatrix}
-1\\
1
\end{pmatrix}\]
and
\[ \begin{pmatrix}
-(p+1) & -1\\
p+2 & 1 
\end{pmatrix}
\begin{pmatrix}
-\ttt_1\\
p\ttt_1+1
\end{pmatrix}
=\begin{pmatrix}
\ttt_1-1\\
-2\ttt_1+1
\end{pmatrix}.\]
A continued fraction expansion for $s_1'=(-2\ttt_1+1)/(\ttt_1-1)$ is given by
\[ [-3,\underbrace{-2,-2,\ldots, -2}_{\ttt_1-2}].\]
Hence, for $\ttt_1>2$ we get 
$N=|(-2)(-1)\cdots (-1)(-2)|=4$; for $\ttt_1=2$ we have $N=3$.

For $\ttt_1=1$ we work instead with the transformation
\[ \begin{pmatrix}
-2p & -1\\
1+2p & 1 
\end{pmatrix}
\begin{pmatrix}
0\\
1
\end{pmatrix}
=\begin{pmatrix}
-1\\
1
\end{pmatrix}\]
and
\[ \begin{pmatrix}
-2p & -1\\
1+2p & 1 
\end{pmatrix}
\begin{pmatrix}
-1\\
p+1
\end{pmatrix}
=\begin{pmatrix}
p-1\\
-p
\end{pmatrix},\]
which gives
\[ s_1'=-\frac{p}{p-1}=
[\underbrace{-2,-2,\ldots, -2}_{p-1}],\]
whence $N=2$.
\subsection{Case (d)}
For $\ttt_0>0$ and $\ttt_1>0$ we compute
\[ \begin{pmatrix}
-p & -1+p\ttt_0\\
1+p & 1-(1+p)\ttt_0 
\end{pmatrix}
\begin{pmatrix}
\ttt_0\\
1
\end{pmatrix}
=\begin{pmatrix}
-1\\
1
\end{pmatrix}\]
and 
\[ \begin{pmatrix}
-p & -1+p\ttt_0\\
1+p & 1-(1+p)\ttt_0 
\end{pmatrix}
\begin{pmatrix}
-\ttt_1\\
p\ttt_1+1
\end{pmatrix}
=\begin{pmatrix}
-1+p\ttt_0+p^2\ttt_0\ttt_1\\
1-\ttt_0-\ttt_1-p\ttt_0-p\ttt_0\ttt_1-p^2\ttt_0\ttt_1
\end{pmatrix}.\]
Hence,
\[s_1'=-1-\frac{\ttt_0+\ttt_1+p\ttt_0\ttt_1}{-1+p\ttt_0+p^2\ttt_0\ttt_1}.\]

For $\ttt_0=1$, the continued fraction expansion is
\[ s_1'=[\underbrace{-2,-2,\ldots, -2}_{p-1}, -(p+3),
\underbrace{-2,-2,\ldots, -2}_{\ttt_1-1}].\]
Thus, $N=p+3$ for $\ttt_1=1$, and $N=2(p+2)$ for $\ttt_1>1$.

For $\ttt_0,\ttt_1>1$ we have the continued fraction expansion
\[ s_1'=[\underbrace{-2,-2,\ldots ,-2}_{p-1}, -3,
\underbrace{-2,-2,\ldots, -2}_{\ttt_0-2}, -(p+2),
\underbrace{-2, -2,\ldots, -2}_{\ttt_1-1}],\]
whence $N=4(p+1)$.
\subsection{Case (e)}
For $\ttt_0<0$ and $\ttt_1>0$ we use the transformation
\[ \begin{pmatrix}
-1 & \ttt_0-1\\
2 & 1-2\ttt_0
\end{pmatrix}
\begin{pmatrix}
\ttt_0\\
1
\end{pmatrix}
=\begin{pmatrix}
-1\\
1
\end{pmatrix}\]
and
\[ \begin{pmatrix}
-1 & \ttt_0-1\\
2 & 1-2\ttt_0
\end{pmatrix}
\begin{pmatrix}
-\ttt_1\\
p\ttt_1+1
\end{pmatrix}
=\begin{pmatrix}
-1+\ttt_0+\ttt_1-p\ttt_1+p\ttt_0\ttt_1\\
1-2\ttt_0-2\ttt_1+p\ttt_1-2p\ttt_0\ttt_1
\end{pmatrix}.\]
Then
\[s_1'=-2-\frac{1+p\ttt_1}{-1+\ttt_0+\ttt_1-p\ttt_1+p\ttt_0\ttt_1}
=[-2, \ttt_0-1,-(p+1),\underbrace{-2,-2,\ldots,-2}_{\ttt_1-1}].\]
For $\ttt_1>1$ this yields $N=2|\ttt_0|p$; for
$\ttt_1=1$ we get $N=|\ttt_0|(p+1)$.
\end{proof}
\section{Computing the invariants from surgery diagrams}
\label{section:invariants}
Except for the case (c1) discussed in terms of contact cuts in
Section~\ref{section:cut}, we are going to describe the
Legendrian realisations of the Hopf link in $L(p,1)$
as front projections of a Legendrian link in a contact surgery
diagram for $(L(p,1),\xi)$ involving only contact $(\pm 1)$-surgeries.
Here we briefly recall how to
compute the classical invariants from such a presentation;
for more details see \cite[Section~5.2]{geon20}.

Write $M$ for the linking matrix of the surgery diagram,
with the surgery knots $K_1,\ldots,K_n$ given auxiliary orientations,
and $\lk(K_j,K_j)$ equal to the topological surgery framing.
The extended linking matrix of a Legendrian knot $L_i$ in this
surgery presentation is
\[ M_i=\left(\begin{array}{c|ccc}
0            & \lk(L_i,K_1) & \cdots & \lk(L_i,K_n)\\ \hline
\lk(L_i,K_1) &              &        &             \\
\vdots       &              & M      &             \\
\lk(L_i,K_n) &              &        &
\end{array}\right). \]
\subsection{Thurston--Bennequin invariant}
Write $\tb_i$ for the Thurston--Bennequin invariant of $L_i$
as a Legendrian knot in $(S^3,\xist)$, that is, before performing
the contact surgeries. Then, in the surgered contact manifold, one has
\[ \tb_{\Q}(L_i)=\tb_i+\frac{\det M_i}{\det M}.\]
\subsection{Rotation number}
Write $\rot_i$ for the rotation number of $L_i$ before the surgery.
With
\[ \vrot:=\bigl(\rot(K_1),\ldots,\rot(K_n)\bigr)\]
and
\[ \vlk_i:=\bigl(\lk(L_i,K_1),\ldots,\lk(L_i,K_n)\bigr)\]
we have
\[ \rot_{\Q}(L_i)=\rot_i-\langle\vrot,M^{-1}\vlk_i\rangle.\]
\subsection{The $d_3$-invariant}
The surgery diagram describes a $4$-dimensional handlebody $X$ with
signature $\sigma$ and Euler characteristic~$\chi=1+n$. Let
$c\in H^2(X)$ be the cohomology class determined by $c(\Sigma_j)=
\rot(K_j)$, where $\Sigma_j$ is the oriented surface made up
of a Seifert surface for $K_j$ and the core disc of the
corresponding handle. Write $q$ for the number of contact $(+1)$-surgeries.

Then the $d_3$-invariant is given by the formula
\[ d_3(\xi)=\frac{1}{4}(c^2-3\sigma-2\chi)+q,\]
where $c^2$ is computed as follows: find the solution vector $\bfx$
of the equation $M\bfx=\vrot$; then $c^2=\bfx^{\ttt}M\bfx=\langle\bfx,\vrot
\rangle$.

The signature $\sigma$ can be computed from the linking matrix
corresponding to the surgery diagram; more efficiently, one can usually
compute it using Kirby moves as described in \cite[p.~1433]{geon20}.
\section{Hopf links in tight $L(p,1)$}
For given values of $\ttt_0,\ttt_1<0$, we have
$\ttt_0\ttt_1(p-1)$ explicit realisations in $(L(p,1),\xitight)$ as
shown in Figure~\ref{figure:case-a}. Here the numbers $k,k_i$ and
$\ell,\ell_i$ refer to the exterior cusps, so that
the surgery curve $K$ has $\tb=-k-\ell+1$ (and $\tb_i=-k_i-\ell_i+1)$,
and to obtain $L(p,1)$
by a contact $(-1)$-surgery on $K$ we need $k+\ell=p$. The
$\tb_i$ can take any negative value. With $L_0, L_1$ both oriented
clockwise, the Hopf link is positive,
and in this case we have $\rot_i=\ell_i-k_i$.

\begin{figure}[h]
\labellist
\small\hair 2pt
\pinlabel $L_0$ [tl] at 40 5
\pinlabel $L_1$ [bl] at 40 125
\pinlabel $k$ [r] at 0 67
\pinlabel $\ell$ [l] at 59 67
\pinlabel $-1$ [bl] at 43 86
\pinlabel $k_1$ [r] at  8 113
\pinlabel $\ell_1$ [l] at 49 113
\pinlabel $k_0$ [r] at 10 26
\pinlabel $\ell_0$ [l] at 51 26
\endlabellist
\centering
\includegraphics[scale=1]{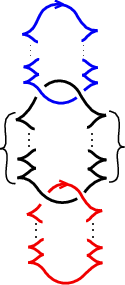}
\caption{Legendrian Hopf links in $(L(p,1),\xitight)$. Here
              $k+\ell=p$.}
\label{figure:case-a}
\end{figure}

The values of the classical invariants as claimed in
Theorem~\ref{thm:main}~(a) now follow easily from the formulas in
Section~\ref{section:invariants}, with $\ttt_i=\tb_i$, $\ttr_i=\rot_i$,
and $\ttr=\rot(K)$. For the $d_3$-invariant we observe
that $\sigma=-1$, $\chi=2$, and $c^2=-\ttr^2/p$, whence
\[ d_3=-\frac{1}{4}\Bigl(1+\frac{\ttr^2}{p}\Bigr).\]

There are no realisations in $(L(p,1),\xitight)$ with one of the
$\ttt_i$ being non-negative, since Legendrian rational unknots
in $(L(p,1),\xitight)$ satisfy $\ttt_i<0$ by
\cite[Theorem~5.5]{baet12} or \cite[Theorem~7.1]{geon15}.

This proves part (a) of Theorem~\ref{thm:main}.

\begin{rem}
\label{rem:tight-d3}
The values of the $d_3$-invariant found above exhausts
all possibilities for the $d_3$-invariant of the tight contact structures
on $L(p,1)$; see \cite[Section~4]{geon15}.
\end{rem}
\section{Legendrian Hopf links via contact cuts}
\label{section:cut}
In this section we use contact cuts to find Legendrian realisations
of Hopf links in $L(p,1)$ for the case (c1).
\subsection{$L(p,1)$ as a cut manifold}
We first want to give a topological description of $L(p,1)$ as a
cut manifold in the sense of Lerman~\cite{lerm01}. We start
from $T^2\times[0,1]=S^1\times S^1\times[0,1]$ with coordinates
$x,y\in S^1=\R/\Z$ and $z\in[0,1]$. Collapsing the first $S^1$-factor
in $S^1\times S^1\times\{0\}$ is equivalent to attaching a solid torus
to  $T^2\times[0,1]$ along $T^2\times\{0\}$ by sending the
meridian of the solid torus to $S^1\times\{*\}\times\{0\}$. This, of course,
simply amounts to attaching a collar to a solid torus, and the meridian
of this `fattened' torus is $\mu_0:=S^1\times\{*\}\times\{1\}$. As longitude
we take the curve $\lambda_0:=\{*\}\times S^1\times\{1\}$.

Now, as described in Section~\ref{section:intro},
$L(p,1)$ is obtained from this
solid torus by attaching another solid torus, whose meridian is glued
to the curve $p\lambda_0-\mu_0$. Equivalently, we may collapse
the foliation of $T^2\times\{1\}$ by circles in the class $p\lambda_0-\mu_0$.

Consider the $p$-fold cover $(\R/\Z)\times(\R/p\Z)\times [0,1]\rightarrow
(\R/\Z)\times(\R/\Z)\times[0,1]$. Set $\tilde{\lambda}_0:=\{*\}\times
(\R/p\Z)\times\{1\}$. The foliation of $(\R/\Z)\times(\R/p\Z)\times [0,1]$
by circles in the class $\tilde{\lambda}_0-\mu_0$ descends to the
foliation defined by $p\lambda_0-\mu_0$. This exhibits $L(p,1)$
as a $\Z_p$-quotient of $L(1,1)=S^3$. Beware that in \cite{geon20}
we used the description of $S^3$ as $L(1,0)$ in the cut construction,
so for $p=1$ one has to take this into account when comparing
the discussion here with the results in~\cite{geon20}.
\subsection{Contact structures from contact cuts}
A contact structure on $T^2\times[0,1]$ will descend to the cut manifold
$L(p,1)$ if, at least near the boundary, it is invariant under the
$S^1$-action whose orbits on the boundary are collapsed to a point,
and if the $S^1$-action is tangent to the contact structure along the
boundary.

We define $a\in (0,\pi/2)$ by the condition $\tan a=p$. For
$\ell\in\N_0$, consider the contact form
\[ \alpha_{\ell}:=\sin\bigl((a+\ell\pi)z\bigr)\,\rmd x+
\cos\bigl((a+\ell\pi)z\bigr)\,\rmd y\]
on $T^2\times[0,1]$. This $1$-form is invariant under the flows of both
$\partial_x$ and $\partial_y$. Along $T^2\times\{0\}$, we have
$\partial_x\in\ker\alpha_{\ell}$; along $T^2\times\{1\}$,
the vector field $-\partial_x+p\partial_y$ is in $\ker\alpha_{\ell}$.
Thus, $\alpha_{\ell}$ descends to a contact form on $L(p,1)$.

By \cite[Proposition~6.1]{geon20}, adapted to the description of $S^3$
as $L(1,1)$, the lift of $\ker\alpha_0$ to $S^3$ is the standard tight contact
structure~$\xist$. Indeed, the map $(x,y)\mapsto (x+y,y)$ sends this lifted
contact structure on $T^2\times [0,1]$, up to isotopy rel boundary and
compatibly with the cuts on the boundary, to the one shown in \cite{geon20}
to define $\xist$ on $S^3=L(1,0)$.

This means that
$\ker\alpha_0$ is the unique (up to diffeomorphism)
universally tight contact structure on $L(p,1)$. The contact structure
$\ker\alpha_{\ell+1}$ is obtained from $\ker\alpha_{\ell}$ by
a $\pi$-Lutz twist. In particular, $\ker\alpha_2$ is obtained from
$\ker\alpha_0$ by a full Lutz twist, and hence is homotopically
equivalent to the universally tight contact structure;
see \cite[Lemma 4.5.3]{geig08}. The surgery description for
the universally tight contact structure on $L(p,1)$ is a contact
$(-1)$-surgery on a $(p-2)$-fold stabilised standard Legendrian
unknot (with $\tb=-1$ and $\rot=0$) in $(S^3,\xist)$, all stabilisations
having the same sign. This contact structure has $d_3$-invariant equal
to $(3p-p^2-4)/4p$ and Euler class $\pm(p-2)$ (in terms of the
natural cohomology generator); see \cite[Section~4]{geon15}
and~\cite{dgs04}.
\subsection{A Legendrian Hopf link in $(L(p,1),\ker\alpha_2)$}
The contact planes in $\ker\alpha_2$ on $T^2\times[0,1]$ have
slope $0$ with respect to $(\partial_x,\partial_y)$ at $z=0$.
As $z$ increases to $z=1$, the contact planes twist (with decreasing slope)
for a little over $2\pi$, until they reach slope $-p$ for the third time
(and after passing through slope $\pm\infty$ twice), similar to
Figure~18 in~\cite{geon20}.

Define $z_0<z_1$ in the interval $[0,1]$ by the conditions
\[ a+2\pi z_0=\frac{\pi}{2}\;\;\;\text{and}\;\;\;
-\sin\bigl((a+2\pi)z_1\bigr)+(p+1)\cos\bigl((a+2\pi)z_1\bigr)=0.\]
This means that at $z=z_0$ the contact planes are vertical for the
first time, and $T^2\times\{z_1\}$ is the second torus (as $z$ increases
from $0$ to~$1$) where the characteristic foliation has slope $-(p+1)$.

Consider the link $L_0\sqcup L_1$ made up of a $(0,1)$-curve
on the torus $\{z=z_0\}$ and a $(-1,p+1)$-curve on $\{z=z_1\}$.
This link is topologically isotopic to the one made up of
a $(0,1)$-curve on $\{z=0\}$ and a $(-1,p+1)$-curve on $\{z=1\}$.
Since these respective curves have intersection number $\pm 1$ with
the curves we collapse at either end,
\[ (0,1)\bullet (1,0)=-1,\;\;\; (-1,p+1)\bullet (-1,p)=1,\]
they are isotopic to the spine of the solid tori attached at either end,
and so they constitute a Hopf link.

A Seifert surface $\Sigma_0$ for $pL_0$, regarded as $p$ times the spine
of the solid torus attached at $z=0$, is given by the following pieces:
\begin{itemize}
\item[-] a helicoidal annulus between $pL_0$ and a $(-1,p)$-curve on $\{z=0\}$,
\item[-] an annulus between this $(-1,p)$-curve on $\{z=0\}$ and the same
curve on $\{z=1\}$,
\item[-] the meridional disc attached to the latter curve at $z=1$.
\end{itemize}
Then
\[ L_1\bullet\Sigma_0=(-1,p+1)\bullet(-1,p)=-p + (p+1)=1.\]
Notice that $\partial_y$ is positively transverse to $\Sigma_0$
on $T^2\times[0,1]$, so our calculation gives the correct sign
of the intersection number. Thus, $L_0\sqcup L_1$ is a \emph{positive}
Hopf link.

Both components $L_0$ and $L_1$ of this Legendrian Hopf link are
loose, since the contact planes twist by more than $\pi$ in either interval
$(z_0,1)$ and $(0,z_1)$, so the cut produces an overtwisted disc.

\begin{lem}
The Legendrian Hopf link $L_0\sqcup L_1$ in $(L(p,1),\ker\alpha_2)$
is exceptional.
\end{lem}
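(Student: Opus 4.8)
The plan is to prove the statement directly from its definition: the link is exceptional precisely when its complement is tight, so I would identify the complement and show that it is tight. The starting observation is that in the cut picture each component $L_i$ is isotopic to the Legendrian core of the solid torus obtained by collapsing at the corresponding end of $T^2\times[0,1]$. I would therefore take as standard neighbourhoods $\nu(L_0)=\mathrm{cut}\bigl(T^2\times[0,z_0+\epsilon]\bigr)$ and $\nu(L_1)=\mathrm{cut}\bigl(T^2\times[z_1-\epsilon,1]\bigr)$; these are genuine tight solid tori, since the contact planes of $\alpha_2$ rotate by less than $\pi$ across either of them. Removing them leaves the complement
\[ L(p,1)\setminus\bigl(\nu(L_0)\cup\nu(L_1)\bigr)\;\cong\;\bigl(T^2\times[z_0+\epsilon,z_1-\epsilon],\ker\alpha_2\bigr),\]
a genuine thickened torus sitting inside the uncut $T^2\times[0,1]$, with no collapsed circles in its interior.

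Next I would compute the total twisting of the contact planes over this thickened torus. Writing $\theta(z)=(a+2\pi)z$ for the angle governing the characteristic foliation, the defining conditions for $z_0$ and $z_1$ give $\theta(z_0)=\pi/2$ and $\theta(z_1)=\pi+\arctan(p+1)$, so that over $[z_0,z_1]$ the planes rotate monotonically by
\[ \theta(z_1)-\theta(z_0)=\frac{\pi}{2}+\arctan(p+1)<\pi,\]
using $\arctan(p+1)<\pi/2$. A fortiori the rotation over the slightly smaller interval $[z_0+\epsilon,z_1-\epsilon]$ is less than $\pi$.

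The key step is then to conclude tightness from this small twisting. The restriction of $\ker\alpha_2$ to $T^2\times[z_0,z_1]$ is of the model form $\ker(\sin\theta\,\rmd x+\cos\theta\,\rmd y)$ with $\theta$ ranging over an interval of length less than $\pi$; after reparametrising in~$\theta$, this is a sub-slab of the universally tight $T^3$ (the contact structure with a single full twist), which is tight by Giroux and Kanda. Being a piece of a tight contact manifold, the complement is tight, and hence $L_0\sqcup L_1$ is exceptional. This should be contrasted with the individual components: the complement of $L_i$ alone retains one of the collapsed ends, across which the planes twist by more than~$\pi$, producing the overtwisted disc that makes each $L_i$ loose.

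I expect the main obstacle to be the first step, namely rigorously identifying the complement as a thickened torus in the cut setting. One must check that the collapsed cores are Legendrian, that $L_i$ is Legendrian isotopic to the core of the respective cut solid torus, and that these cut solid tori are bona fide standard neighbourhoods (convex boundary, two dividing curves) realising the framings $\ttt_0=0$ and $\ttt_1=1$ recorded by $\tb_{\Q}(L_i)$. Once the complement is pinned down as $(T^2\times I,\ker\alpha_2)$ with the stated boundary slopes, the twisting computation and the tightness conclusion are routine.
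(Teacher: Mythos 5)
Your strategy is genuinely different from the paper's, and its geometric input is largely correct: the twisting of $\ker\alpha_2$ over $[z_0,z_1]$ is indeed $\tfrac{\pi}{2}+\arctan(p+1)<\pi$, and a linearly twisting slab of this kind is universally tight. But there is a genuine gap at the central step. The link complement is $L(p,1)\setminus(L_0\cup L_1)$, not $L(p,1)\setminus\bigl(\nu(L_0)\cup\nu(L_1)\bigr)$: besides your slab it contains the two punctured solid tori $\nu(L_i)\setminus L_i$, and an overtwisted disc in the link complement may perfectly well enter these regions. Tightness of the slab therefore does not, by itself, give exceptionality. To bridge this you would need (i) that each $\nu(L_i)$ is a standard neighbourhood of $L_i$ in the technical sense, and (ii) the shrinking argument: a compact overtwisted disc misses some small standard neighbourhoods $N_i$ of the $L_i$, and by uniqueness of standard neighbourhoods an ambient contact isotopy fixing the link carries $\nu(L_i)$ to $N_i$, pushing the disc into the slab. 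You never address (ii), and your plan for (i) contains a concrete error: the collapsed cores are \emph{transverse} curves, not Legendrian ones. At $z=0$ we have $\alpha_2=\rmd y$, so the contact plane at a point of the core $(T^2\times\{0\})/S^1$ is the tangent plane to the meridional disc, while the core points in the $\partial_y$-direction; hence $L_0$ cannot be Legendrian isotopic to the core, and the analogous statement fails at $z=1$. Moreover, the boundary $T^2\times\{z_0+\epsilon\}$ is pre-Lagrangian, not convex; after the necessary perturbation its two dividing curves have slope $\cot\bigl((a+2\pi)\epsilon\bigr)$, which is close to, but not equal to, the contact framing slope of $L_0$. So your $\nu(L_i)$ are not literally standard neighbourhoods, and identifying the pairs $(\nu(L_i),L_i)$ with the standard model would require a further argument via the classification of tight solid tori.

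It is worth seeing how the paper turns exactly the feature you mis-stated into the proof. Since the collapsed cores are transverse, one pushes $L_0$ and $L_1$ off to a transverse link, transversely isotopes that link onto the two cores, and invokes the transverse isotopy extension theorem: any overtwisted disc in the link complement would be carried by the resulting ambient contact isotopy into the complement of the two cores, which is precisely $(T^2\times(0,1),\ker\alpha_2)$; this is tight because it is universally tight (its universal cover embeds into the standard tight $\R^3$). This route needs no convexity, no standard neighbourhoods, and no discussion of boundary slopes, which is why the paper works with transverse push-offs rather than with neighbourhoods of the Legendrian components. If you want to salvage your approach, the missing ingredients are exactly items (i) and (ii) above; as written, the proposal does not prove the lemma.
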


\begin{proof}
Arguing by contradiction, suppose there were an overtwisted disc in the
complement of the link. This disc would persist in the complement of the
transverse link $L_0'\sqcup L_1'$ obtained by pushing $L_0$ a little
towards $z=0$, and $L_1$ towards $z=1$.
In $T^2\times[0,1]$, this link is transversely isotopic
to the link made up of a $(0,1)$-curve on $\{z=0\}$ and a $(-1,p+1)$-curve
on $\{z=1\}$. In the cut manifold $L(p,1)$, this gives a transverse isotopy
(and hence an ambient contact isotopy)
from $L_0'\sqcup L_1'$ to the transverse link made up of the collapsed
boundaries $(T^2\times\{0\})/S^1$ and $(T^2\times\{1\})/S^1$.
The complement of the latter, however, is
contactomorphic to $(T^2\times (0,1),\ker\alpha_2)$, which is tight, as it
embeds into the standard tight contact structure on~$\R^3$.
\end{proof}
\subsection{Computing $\tb_{\Q}$}
A Legendrian push-off of $L_0$ is simply a parallel $(0,1)$-curve
$L_0'$ on $\{ z=z_0\}$. The topological isotopy from $pL_0$ to $p$ times
the spine of the solid torus attached at $z=0$ can be performed in the
complement of $L_0'$. So the rational Thurston--Bennequin invariant
(as defined in~\cite{baet12}) of $L_0$ is given by
\[ \tb_{\Q}(L_0)=\frac{1}{p}L_0'\bullet\Sigma_0=
\frac{1}{p}(0,1)\bullet(-1,p)=\frac{1}{p}.\]

For $L_1$ we argue similarly. A Legendrian push-off $L_1'$ is given by a
parallel $(-1,p+1)$-curve on $\{z=z_1\}$. The isotopy of $L_1$ to the spine
of the solid torus attached at $z=1$ can be performed in the complement
of $L_1'$, so $\tb_{\Q}(L_1)$ can be computed as $L_1'\bullet\Sigma_1$,
where $\Sigma_1$ is a Seifert surface for $p$ times that spine,
made up of the following pieces:
\begin{itemize}
\item[-] a helicoidal annulus between $pL_1$ and a curve on
$\{z=1\}$ in the class
\[ p(-1,p+1)-(p+1)(-1,p)=(1,0)\]
(see below for an explanation of this choice),
\item[-] an annulus between this $(1,0)$-curve on $\{z=1\}$ and the same
curve on $\{z=0\}$,
\item[-] the meridional disc attached to the latter curve at $z=0$.
\end{itemize}
For the first constituent of this Seifert surface, notice that the $p$-fold
covered spine of a solid torus can be joined by an annulus to any
simple curve on the boundary in a class $p\lambda+k\mu$, $k\in\Z$,
where $\lambda$ is any longitude on the boundary, and $\mu$ the meridian.
Or, more directly, simply observe that in our case
$\mu=(-1,p)$, and $(1,0)\bullet (-1,p)=p$.

The vector field $\partial_y$ is positively transverse to $\Sigma_1$
on $T^2\times [0,1]$, so we obtain
\[ \tb_{\Q}(L_1)=\frac{1}{p}L_1'\bullet\Sigma_1=\frac{1}{p}(p+1)
=1+\frac{1}{p}.\]
\subsection{Frames for $\ker\alpha_{\ell}$}
A frame for $\ker\alpha_{\ell}$ on $T^2\times[0,1]$, compatible with the
orientation defined by $\rmd\alpha_{\ell}$, is given by
\[ \partial_z\;\;\;\text{and}\;\;\;
X_{\ell}:=\cos\bigl((a+\ell\pi)z\bigr)\partial_x-
\sin\bigl((a+\ell\pi)z\bigr)\partial_y.\]
This frame does \emph{not} descend to a frame of the contact structure
on $L(p,1)$.

At $z=0$ we have $X_{\ell}=\partial_x$. If we think of the cut at
$z=0$ as being defined by attaching a solid torus, with meridian
being sent to the $x$-curves, the vector field $\partial_z$ is
outward radial along the boundary of the solid torus, and $X_{\ell}=\partial_x$
is positively tangent to the meridional curves. It follows that a frame
of $\ker\alpha_{\ell}$ that extends over the cut at $z=0$ is given by
\[ \cos(2\pi x)\partial_z-\sin(2\pi x)X_{\ell}\;\;\;\text{and}\;\;\;
\sin(2\pi x)\partial_z+\cos(2\pi x)X_{\ell};\]
cf.\ \cite[Figure 16]{geon20}.

At $z=1$, where we collapse the flow lines of $-\partial_x+p\partial_y$, we
have
\[ X_{\ell}=\pm(\cos a\, \partial_x-\sin a\,\partial_y)=
\mp \cos a\, (-\partial_x+p\,\partial_y),\]
depending on $\ell$  being even or odd.
If we think of the cut again as attaching a solid torus, now $\partial_z$
is inward radial along the boundary of the solid torus, and $X_{\ell}$
is tangent to the meridional curve (positively for $\ell$ odd,
negatively for $\ell$ even). So the frame that extends over the cut
at $z=1$ is
\[ \cos(2\pi x)\partial_z+\sin(2\pi x)X_{\ell}\;\;\;\text{and}\;\;\;
-\sin(2\pi x)\partial_z+\cos(2\pi x) X_{\ell}\;\;\;
\text{for $\ell$ even},\]
and
\[ \cos(2\pi x)\partial_z-\sin(2\pi x)X_{\ell}\;\;\;\text{and}\;\;\;
\sin(2\pi x)\partial_z+\cos(2\pi x) X_{\ell}\;\;\;
\text{for $\ell$ odd}.\]
Thus, only for $\ell$ odd do we have a global frame.
\subsection{Computing $\rot_{\Q}$}
We now look again at the Legendrian Hopf link $L_0\sqcup L_1$
in $(L(p,1),\ker\alpha_2)$.

For the computation of the intersection number $L_1\bullet\Sigma_0$
between $L_1$ and a Seifert surface for $pL_0$ we had the freedom
to isotope $L_0$ (topologically) in the complement of
$L_1$ to the spine of the solid torus
attached at $z=0$. When we want to compute $\rot_{\Q}(L_0)$, we need to
work with a Seifert surface for the original Legendrian $L_0$.
This means that we need to work with the Seifert surface $\Sigma_0$
made up of the following pieces:
\begin{itemize}
\item[-] a $p$-fold covered straight annulus between $pL_0$, i.e.\
the $(0,p)$-curve on the torus $\{z=z_0\}$, and the $p$-fold covered spine
of the solid torus attached at $z=0$,
\item[-] a helicoidal annulus between $p$ times the spine
and a $(-1,p)$-curve on the torus $\{z=0\}$,
\item[-] an annulus between this $(-1,p)$-curve on $\{z=0\}$ and the same
curve on $\{z=1\}$,
\item[-] the meridional disc attached to the latter curve at $z=1$.
\end{itemize}

\begin{rem}
This surface is not embedded, but the computation of rotation numbers
is homological, so this is not a problem.
\end{rem}

Our aim is to find a frame of $\ker\alpha_2$ defined over~$\Sigma_0$.
We begin with the frame defined over $T^2\times[0,1]$
by $\cos(2\pi x)\partial_z+\sin(2\pi x)X_2$
(and its companion defining the correct orientation), which is the
one that extends over the cut at $z=1$. In particular, this frame
is defined over the third and fourth constituent of $\Sigma_0$, and we need
to extend it over the first two pieces of~$\Sigma_0$.

Write $(\R/\Z)\times D^2$ with coordinates $(y;r,\theta)$ for the solid torus
attached at $z=0$. We pass to the $p$-fold covers
\[ (\R/\Z)^2\times [0,1]\longrightarrow (\R/\Z)^2\times[0,1],\;\;
(x,y,z)\longmapsto (x,py,z)\]
and
\[ (\R/\Z)\times D^2\longrightarrow (\R/\Z)\times D^2,\;\;
(y;r,\theta)\longmapsto (py;r,\theta),\]
where the lifted pieces of $\Sigma_0$ are embedded:
a straight annulus between the $(0,1)$-curve on $\{z=z_0\}$
and the spine of the solid torus, plus a helicoidal annulus between
the spine and the $(-1,1)$-curve on the boundary of the solid torus.
For the following homotopical considerations, we may think
of $\ker\alpha_2$ as being extended over that solid torus
as the constant horizontal plane field.

Along the $(-1,1)$ curve on $\{z=0\}$, parametrised as $\R/\Z\ni t\mapsto
(x(t),y(t),0)=(-t,t,0)$, the frame we are considering is
\[ \cos(2\pi t)\partial_z-\sin(2\pi t)\partial_x.\]
In the cylindrical coordinates $(y;r,\theta)$ on the solid torus
this translates into the frame
\[ \cos(2\pi t)\partial_r-\sin(2\pi t)\partial_{\theta}\]
along the curve $t\mapsto (y(t),\theta(t))=(t,-2\pi t)$ on $\{r=1\}$.

Next, we translate this into Cartesian coordinates $(u,v)$ on
the $D^2$-factor. With $r\partial_r=u\partial_u+v\partial_v$
and $\partial_{\theta}=u\partial_v-v\partial_u$, and the curve in question
being $(u(t),v(t))=(\cos 2\pi t,-\sin 2\pi t)$, this gives the frame
\[ \begin{array}{rcl}
\cos(2\pi t)\bigl(\cos(2\pi t)\partial_u-\sin(2\pi t)\partial_v\bigr)-&&\\[2mm]
\sin(2\pi t)\bigl(\cos(2\pi t)\partial_v+\sin(2\pi t)\partial_u\bigr) &=&
\cos(4\pi t)\partial_u-\sin(4\pi t)\partial_v.
\end{array}\]

This formula defines the extension of the frame over the helicoidal annulus
and the part of the straight annulus inside the solid torus.
The intersection of the straight annulus with the torus $\{z=0\}$
is the $(0,1)$-curve, parametrised as $t\mapsto (x(t),y(t))=(0,t)$,
and if we take this curve to be given by $\{ u=1,v=0\}$ (so that
$\partial_u=\partial_z$ and $\partial_v=\partial_x$), the frame is now
written as
\[ \cos(4\pi t)\partial_z-\sin(4\pi t)\partial_x,\]
which extends over the annulus between the $(0,1)$-curve on $\{z=0\}$
and that on $\{z=z_0\}$ (i.e.\ the $p$-fold cover of~$L_0$) as
\[ \cos(4\pi t)\partial_z-\sin(4\pi t)X_2.\]
Notice that at $z=z_0$ we have $X_2=-\partial_y$. The
orientation of $\ker\alpha_2$ is defined by $(\partial_z,X_2)$, so
the frame makes two \emph{negative} rotations with respect to the
tangent vector $\partial_y=-X_2$ of the $p$-fold
covered~$L_0$. We conclude that $\rot_{\Q}(L_0)=2/p$.

\vspace{1mm}

Next we compute $\rot_{\Q}(L_1)$ in an analogous fashion. We now use the frame
$\cos(2\pi x)\partial_z-\sin(2\pi x)X_2$ on $T^2\times [0,1]$, which is the
one that extends over the cut at $z=0$. The cut we perform at $z=1$
corresponds to the attaching of a solid torus $(\R/\Z)\times D^2$
using the gluing map
\[ \mu=\{0\}\times\partial D^2\longmapsto (-1,p)\;\;\;\text{and}\;\;\;
\lambda=(\R/\Z)\times\{1\} \longmapsto (-1,p+1).\]
Note that with respect to the orientation defined by $(\partial_x,\partial_y)$,
the intersection number of meridian and longitude is
\[ \mu\bullet\lambda=(-1,p)\bullet(-1,p+1)=-1,\]
which is what we want, since $\partial_z$ is the inward normal
of the solid torus along its boundary, so this boundary is oriented
by $(\partial_y,\partial_x)$. Notice also that the tangent direction
of $\mu$ coincides with $-X_2$.

The Legendrian knot $L_1$ on $\{z=z_1\}$ is a $(-1,p+1)$-curve, so the
parallel curve on $\{z=1\}$ is in the class of~$\lambda$. The relevant parts
of the Seifert surface $\Sigma_1$ for $pL_1$ in the $p$-fold cover, i.e.\
the lift with respect to the map
\[ (\R/\Z)\times D^2\longrightarrow (\R/\Z)\times D^2,\;\;
(s;r,\theta)\longmapsto (ps;r,\theta),\]
are the following:
\begin{itemize}
\item[-] a straight annulus between the lifted longitude and the spine,
\item[-] a helicoidal annulus between the spine and the $(1,0)$-curve on
$\{z=1\}$; recall that $p\lambda-(p+1)\mu=(1,0)$.
\end{itemize}

At $z=1$, the frame $\cos(2\pi x)\partial_z-\sin(2\pi x)X_2$
we have chosen equals
\[ \cos(2\pi x)\partial_z+\sin(2\pi x)\cos a(-\partial_x+p\partial_y).\]
Along the $(1,0)$-curve, parametrised as $t\mapsto (x(t),y(t))=(t,0)$,
this frame is
\[ \cos(2\pi t)\partial_z+\sin(2\pi t)\cos a(-\partial_x+p\partial_y)=
-\cos(2\pi t)\partial_r+\sin(2\pi t)\partial_{\theta}.\]
Translated into Cartesian coordinates $(u,v)$ on the $D^2$-factor of
the solid torus, the curve (or rather its projection to
the $D^2$-factor) becomes
\[ (u(t),v(t))=(\cos 2\pi(p+1)t,-\sin 2\pi(p+1)t),\]
and translating the frame into Cartesian coordinates as above, we obtain
\[ \begin{array}{rc}
-\cos(2\pi t)\bigl(\cos(2\pi(p+1)t)\partial_u-
\sin(2\pi(p+1)t)\partial_v\bigr)+&\\[2mm]
\sin(2\pi t)\bigl(\cos(2\pi(p+1)t)\partial_v+
\sin(2\pi(p+1)t)\partial_u\bigr)& = \\[2mm]
-\cos(2\pi(p+2)t)\partial_u+\sin(2\pi(p+2)t)\partial_v.&
\end{array} \]
This formula defines the extension of the frame over the helicoidal
annulus and the straight annulus inside the solid torus.
Along the intersection of the straight annulus with
the boundary of the solid torus, given again by $\{u=1,v=0\}$,
and further on the annulus between $p\lambda=p(-1,p+1)$ and $pL_1$,
this frame extends as
\[ \cos(2\pi(p+2)t)\partial_z-\sin(2\pi(p+2)t)X_2,\]
since at $(u,v)=(1,0)$ the vector $\partial_u$ is the
outward normal $-\partial_z$, and $\partial_v$ points in
meridional direction, which is identified with $-X_2$.

This frame makes $p+2$ \emph{negative} rotations with respect
to the tangent direction $X_2$ of $L_1$, which yields
$\rot_{\Q}(L_1)=(p+2)/p=1+2/p$.

\vspace{1mm}

Since we may flip the orientations of $L_0$ and $L_1$ simultaneously,
this gives us in total two realisations, with invariants
\[ (\tb_{\Q}(L_0),\rot_{\Q}(L_0))=\Bigl(\frac{1}{p},\pm\frac{2}{p}\Bigr),
\;\;\;
(\tb_{\Q}(L_1),\rot_{\Q}(L_1))=\Bigl(1+\frac{1}{p},
\pm\Bigl(1+\frac{2}{p}\Bigr)\Bigr).\]

\begin{rem}
For $p=1$, this accords with the case $(1,\pm 2)$ and $(2,\pm 3)$
discussed in \cite[Section~7.4]{geon20}.
\end{rem}
\section{Detecting exceptional links}
\label{section:detecting}
Before we turn to the Legendrian realisations of Legendrian
Hopf links in $L(p,1)$ in terms of contact surgery diagrams, we discuss
how to establish that a given Hopf link is exceptional, and how to decide
whether the individual components are loose or exceptional.

First one needs to verify that the contact structure given
by the surgery diagram is overtwisted.
If the $d_3$-invariant differs from that of any of the tight
structure (see Remark~\ref{rem:tight-d3}), this
is obvious. If the $d_3$-invariant \emph{does} match that of a tight
structure, overtwistedness can be shown by exhibiting
a Legendrian knot in the surgered manifold that violates the
Bennequin inequality \cite[Theorem~1]{baet12} for Legendrian knots
in tight contact $3$-manifolds. In all our examples, one of $L_0$ or $L_1$
will have this property. Alternatively, one can appeal to
the classification of Legendrian rational unknots in $L(p,1)$ with
a tight contact structure~\cite[Theorem~7.1~(a)]{geon15}. If the invariants
of $L_0$ or $L_1$ do not match those listed there (in particular,
$\tb_{\Q}(L)=\ttt+1/p$ with $\ttt$ a negative integer),
then the contact structure
must be overtwisted. Again, this covers all cases (b)--(e).

Secondly, we need to establish that the contact structure on the
link complement $L(p,1)\setminus(L_0\sqcup L_1)$ is tight. The method we use is
to perform contact surgeries on $L_0$ and $L_1$, perhaps also on
Legendrian push-offs of these knots, such that the
resulting contact manifold is tight. If there had been an overtwisted
disc in the complement of $L_0\sqcup L_1$, this would persist
after the surgery.

Here we rely on the cancellation lemma from \cite{dige04},
cf.~\cite[Proposition 6.4.5]{geig08}, which says that a contact
$(-1)$-surgery and a contact $(+1)$-surgery along a Legendrian knot
and its Legendrian push-off, respectively, cancel each other.
For instance, if by contact $(-1)$-surgeries on $L_0$ and $L_1$
we can cancel all contact $(+1)$-surgeries in the surgery diagram,
and thus obtain a Stein fillable and hence tight contact $3$-manifold,
the Legendrian Hopf link will have been exceptional.

To determine whether one of the link components is loose,
we sometimes rely on the classification of exceptional rational unknots
in $L(p,1)$ given in \cite[Theorem~7.1]{geon15}:

\begin{thm}
\label{thm:geon15}
Up to coarse equivalence, the exceptional rational unknots in
$L(p,1)$ are classified by their classical invariants
$\tb_{\Q}$ and $\rot_{\Q}$. The possible values of
$\tb_\mathbb{Q}$ are $\ttt+1/p$ with $\ttt\in\N_0$. For $\ttt=0$, there is a
single exceptional knot, with $\rot_{\Q}=0.$ For $\ttt=1$, there are $p+1$
exceptional knots, with
\[ \rot_{\Q}\in\Bigl\{-1, -1+\frac{2}{p}, -1+\frac{4}{p}, \ldots,
-1+\frac{2p}{p}\Bigr\}.\]
For $\ttt\geq 2$, there are $2p$ exceptional knots, with
\[\rot_{\Q}\in\Bigl\{\pm\Bigl(\ttt-2+\frac{2}{p}\Bigr),
\pm\Bigl(\ttt-2+\frac{4}{p}\Bigr),\ldots,
\pm\Bigl(\ttt-2+\frac{2p}{p}\Bigr)\Bigr\}.\]
\end{thm}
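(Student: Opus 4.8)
The plan is to reduce the classification to Honda's enumeration of tight contact structures on a solid torus, exactly in the spirit of the proof of Proposition~\ref{prop:complement}, and then to extract the rotation numbers from a relative Euler class computation. First I would pass to the complement. A Legendrian rational unknot $L$ in $L(p,1)$ is isotopic to the spine of one of the two Heegaard solid tori, so a standard convex neighbourhood $\nu L$ has $\partial(\nu L)$ carrying two dividing curves whose slope is fixed by the contact framing, hence by $\tb_{\Q}(L)=\ttt+\frac{1}{p}$. The complement $V':=L(p,1)\setminus\nu L$ is again a solid torus, and $L$ is exceptional precisely when $V'$ is tight. Since the genus~$1$ Heegaard splitting and its spines are unique up to isotopy, two exceptional realisations are coarsely equivalent if and only if the induced tight structures on $V'$ agree up to the action of the orientation- and coorientation-preserving contactomorphism group of $L(p,1)$; identifying that action, and checking that it does not further collapse the count, will be part of the work.

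Next I would count. Translating the boundary slope into the coordinates of $V'$ and normalising by an element of $\SL(2,\Z)$, the number of tight contact structures on $V'$ with this boundary data is given by a continued-fraction product of the type~\eqref{eqn:N}, by Honda's classification of tight contact structures on solid tori~\cite{hond00I}. Carrying out the continued-fraction expansion for the slope corresponding to $\ttt$ should yield exactly one structure for $\ttt=0$, then $p+1$ structures for $\ttt=1$, and $2p$ structures for $\ttt\ge 2$. The restriction $\ttt\in\N_0$ is exactly the regime in which the ambient structure is overtwisted: for $\ttt<0$ one lands instead in a tight $L(p,1)$, which is governed by the Bennequin inequality of~\cite{baet12} and belongs to the tight classification.

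The rotation numbers I would compute from the relative Euler class of $\ker\alpha$ on $V'$ evaluated against a rational Seifert surface for $L$, equivalently from the signs attached to the successive bypass layers in the continued-fraction decomposition. These sign choices produce precisely the arithmetic progressions $-1+\frac{2k}{p}$ for $0\le k\le p$ when $\ttt=1$, and $\pm(\ttt-2+\frac{2k}{p})$ for $1\le k\le p$ when $\ttt\ge 2$; here the $\pm$ records two genuinely distinct sign patterns, not a symmetry quotient, which is why both signs appear in the final list.

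The main obstacle I expect to lie in the \emph{injectivity}, namely proving that distinct tight structures on $V'$ carry distinct values of $\rot_{\Q}$ (this is the Legendrian simplicity of the rational unknot), together with the correct bookkeeping of the ambient symmetry at the small values $\ttt=0$ and $\ttt=1$, where the naive Honda count and the identifications induced by self-contactomorphisms of $L(p,1)$ must be reconciled to give exactly $1$ and $p+1$ rather than a larger number. The realisation (surjectivity) direction should be comparatively routine, obtained by exhibiting explicit tight fillings of $V'$ with the prescribed dividing curves and checking that each attains one of the claimed invariant pairs.
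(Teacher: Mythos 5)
A preliminary point that matters for the comparison: the paper you were given does \emph{not} prove this statement. It is imported verbatim from \cite[Theorem~7.1]{geon15}, and the only ``proof'' in the present text is that citation. Your outline is, in essence, the strategy actually carried out in that reference (and it parallels the present paper's own method for Hopf links, Proposition~\ref{prop:complement} plus Sections~\ref{section:detecting} and~\ref{section:exceptional}): pass to the solid-torus complement of a standard neighbourhood, bound the number of realisations with given $\tb_{\Q}$ by the Giroux--Honda classification, realise the bound explicitly, and read off the invariants. So in approach you are aligned with the source.

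However, you misplace the difficulty, and the step you flag as the ``main obstacle'' is not where any work lies. The dividing set of $\partial(\nu L)$ is $\ttt\mu_0+\lambda_0$, which in the basis $(\mu_1',\lambda_1')$ of the complementary solid torus becomes $-\ttt\mu_1'+(p\ttt+1)\lambda_1'$, of slope
\[ -\frac{p\ttt+1}{\ttt}=-p-\frac{1}{\ttt}=
[-p-1,\underbrace{-2,\ldots,-2}_{\ttt-1}], \]
so the product of type~\eqref{eqn:N} gives $|(-p)(-1)^{\ttt-2}(-2)|=2p$ for $\ttt\geq 2$, $|-p-1|=p+1$ for $\ttt=1$, and (after normalising the meridional twisting at $\ttt=0$, where the slope is that of $\lambda_1'$, to $[-1]$) exactly $1$ for $\ttt=0$. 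The counts $1$, $p+1$, $2p$ fall out on the nose; there is no larger ``naive Honda count'' at $\ttt=0,1$ to be collapsed by ambient contactomorphisms, and the apparent overlap of the two sign lists $\pm(-1+\frac{2k}{p})$ at $\ttt=1$ is already encoded in the single continued-fraction block $[-p-1]$. Likewise, ``injectivity'' is never proved as a separate statement: the Honda count is an upper bound on coarse equivalence classes with given $\tb_{\Q}$, and exhibiting that many realisations with pairwise distinct $\rot_{\Q}$ --- in \cite{geon15} via explicit contact surgery diagrams and the linking-matrix formulas recalled in Section~\ref{section:invariants}, not via a relative Euler class evaluation on a rational Seifert surface --- simultaneously attains the bound and shows the invariants classify. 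Finally, ``exhibiting explicit tight fillings of $V'$'' understates the surjectivity step: one must check that the ambient structure is overtwisted (automatic for $\ttt\in\N_0$, since Legendrian rational unknots in tight $L(p,1)$ satisfy $\ttt<0$) \emph{and} that the link complement is tight, which is done with the cancellation lemma of \cite{dige04}, by killing all contact $(+1)$-surgeries with $(-1)$-surgeries along $L$ and its Legendrian push-offs. With these corrections your skeleton matches the actual argument, but as written it budgets its effort for a non-existent symmetry quotient while leaving the genuinely computational parts (diagrams, $\rot_{\Q}$, $d_3$) as asides.
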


In a few cases, the invariants of one link component
equal those realised by an exceptional rational unknot,
but we detect looseness by computing the $d_3$-invariant
and observing, again comparing with~\cite{geon15},
that it does not match that of an exceptional realisation.
\section{Exceptional Hopf links}
\label{section:exceptional}
In this section we find exceptional Legendrian realisations of the
Hopf link, except case (c1)---which is covered by
Section~\ref{section:cut}---, in contact surgery diagrams for $L(p,1)$.
This completes the proof of Theorem~\ref{thm:main}.
\subsection{Kirby diagrams}
We begin with some examples of Kirby diagrams of the Hopf link
that will be relevant in several cases of this classification.
The proof of the following lemma is given by the Kirby moves in the
corresponding diagrams.

\begin{lem}
(i) The oriented link $L_0\sqcup L_1$ in the surgery diagram
shown in the first line of Figure~\ref{figure:kirby-c3}
is a positive or negative Hopf link in~$L(p,1)$, depending
on $k$ being even or odd.
The same is true for the links shown
in the first line of Figure~\ref{figure:kirby-d2} and~\ref{figure:kirby-e2},
respectively. 

(ii) The oriented link $L_0\sqcup L_1$ shown in Figure~\ref{figure:kirby-d1}
is a positive Hopf link in $L(p,1)$.

(iii) The oriented link $L_0\sqcup L_1$ in the first line
of Figure~\ref{figure:kirby-d3}
is a positive or negative Hopf link depending on $k_0$ and $k_1$
having the same parity or not.\qed
\end{lem}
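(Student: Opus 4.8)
The plan is to verify each claimed Hopf link identification by reducing the surgery diagrams to the standard surgery presentation of the Hopf link in $L(p,1)$ established in the introduction, namely a single $(-p)$-surgery on an unknot $K$ with $L_0$ and $L_1$ as meridians. Since the diagrams in question are contact surgery diagrams involving contact $(\pm 1)$-surgeries, I would first pass to the underlying topological surgery picture, converting each contact $(\pm 1)$-surgery to its topological framing (a contact $(+1)$-surgery on a knot of Thurston--Bennequin number $\tb$ becomes a topological $(\tb+1)$-surgery, a contact $(-1)$-surgery becomes a topological $(\tb-1)$-surgery). The goal of the Kirby calculus is then purely topological: show the ambient manifold is $L(p,1)$ and that $L_0\sqcup L_1$ sits as the spines of the two solid tori in the genus~$1$ Heegaard splitting, with the correct linking so as to fix positivity or negativity.

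The core of the argument is a sequence of Kirby moves---slam-dunks, handle slides, and blow-downs/blow-ups of $\pm 1$-framed unknots---carried out directly on each figure. For a diagram containing a chain of $(-2)$-framed unknots together with the surgery knots, the standard procedure is to slam-dunk or slide along the chain so as to collapse it to a single $(-p)$-framed unknot $K$, tracking how $L_0$ and $L_1$ ride along under each slide. After the reduction, I would read off the rational linking number $\lk_{\Q}(L_0,L_1)$ and compare it with $\pm 1/p$: by the characterisation of the positive Hopf link in Section~\ref{section:intro}, the sign of this rational linking number (equivalently, whether the oriented $L_0\sqcup L_1$ is the positive or negative Hopf link) is exactly what distinguishes the two cases. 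The parity conditions---$k$ even/odd in~(i), and $k_0,k_1$ having equal parity or not in~(iii)---enter precisely through how the auxiliary orientations and the number of stabilisation cusps affect the sign of the meridional intersection, hence the sign of the rational linking number after reduction.

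Concretely, for part~(i) I would perform the Kirby moves indicated in the first lines of Figures~\ref{figure:kirby-c3}, \ref{figure:kirby-d2}, and~\ref{figure:kirby-e2}, reducing each to Figure~\ref{figure:hl} and observing that a change in the parity of $k$ reverses one of the meridional orientations, flipping the Hopf link between positive and negative. Part~(ii) is the cleanest case: the moves on Figure~\ref{figure:kirby-d1} should reduce directly to the positive picture with no sign ambiguity, since the orientations are fixed so that both components are meridians traversed the same way. For part~(iii), the two independent parameters $k_0,k_1$ each control one component's orientation after reduction, so the link is positive exactly when the two parity contributions agree and negative when they differ; this is the combined-parity statement.

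The main obstacle will be bookkeeping the orientations and the resulting signs correctly through the chain of moves---in particular, keeping precise track of how each handle slide alters the orientation and the meridian/longitude framing of $L_0$ and $L_1$, so that the final rational linking number carries the right sign. Individual Kirby moves are routine, but an orientation slip at any single slide would invert the positive/negative conclusion, so the care lies in the sign accounting rather than in any deep step. Since the proof is asserted to be given by the moves in the figures themselves, I would present the argument as an annotated reading of those diagrams, highlighting at each stage the parity-dependent orientation reversal that produces the stated dichotomy, and appealing to the uniqueness of the genus~$1$ Heegaard splitting from Section~\ref{section:intro} to conclude that the reduced link is indeed \emph{the} Hopf link of the claimed sign.
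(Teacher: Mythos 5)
Your proposal takes essentially the same route as the paper: the paper's proof consists precisely of the sequences of Kirby moves displayed in the cited figures, reducing each first-line diagram to the standard $(-p)$-framed unknot picture of Figure~\ref{figure:hl}, with the positive/negative dichotomy coming from tracking orientations (and hence the sign of the rational linking number) through the parity-dependent blow-downs, exactly as you describe. One small correction: the diagrams in Figures~\ref{figure:kirby-c3}--\ref{figure:kirby-e2} are already smooth surgery diagrams with topological framings, so your preliminary step of converting contact $(\pm 1)$-surgeries to topological framings is unnecessary for this lemma.
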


\begin{figure}[h]
\labellist
\tiny\hair 2pt
\pinlabel $L_1$ at 10 266
\pinlabel $-1$ at 23 256
\pinlabel $-2$ at 11 248
\pinlabel $k$ [t] at 65 228
\pinlabel $-2$ at 40 260
\pinlabel $-2$ at 51 260
\pinlabel $-2$ at 84 261
\pinlabel $-3$ at 100 261
\pinlabel $-1$ at 116 247
\pinlabel ${0/0/\cdots /0}$ [t] at 129 230
\pinlabel $L_0$ at 143 266
\pinlabel $L_1$ at 5 210
\pinlabel $1$ [bl] at 18 185
\pinlabel $1$ [r] at 0 200
\pinlabel $1$ [r] at 0 193
\pinlabel $-2$ at 38 206
\pinlabel $-2$ at 51 206
\pinlabel $-2$ at 84 206
\pinlabel $-2$ at 100 206
\pinlabel $1$ at 112 184
\pinlabel $k$ at 65 172
\pinlabel $L_0$ at 149 208
\pinlabel ${1/1/\cdots /1}$ [t] at 129 177
\pinlabel $L_1$ at 5 158
\pinlabel $1$ at 24 163
\pinlabel $1$ [r] at 6 148
\pinlabel $1$ [r] at 6 141
\pinlabel $-2$ at 38 159
\pinlabel $-2$ at 51 159
\pinlabel $-2$ at 82 159
\pinlabel $-2$ at 98 159
\pinlabel $-p-1$ at 113 159
\pinlabel $k$ at 75 120
\pinlabel $L_0$ at 144 152
\pinlabel $L_1$ at 2 105
\pinlabel $-1$ at 23 115
\pinlabel $-2$ at 38 111
\pinlabel $-2$ at 51 111
\pinlabel $-2$ at 82 111
\pinlabel $-2$ at 98 111
\pinlabel $-p-1$ at 113 111
\pinlabel $k$ at 75 72
\pinlabel $L_0$ at 145 103
\pinlabel $L_1$ at 30 56
\pinlabel $-1$ at 55 65
\pinlabel $-2$ at 66 63
\pinlabel $-p-1$ at 81 61
\pinlabel $L_0$ at 115 56
\pinlabel $L_1$ at 5 20
\pinlabel $-1$ at 23 29
\pinlabel $L_0$ at 71 20
\pinlabel $-p-1$ at 40 27
\pinlabel $L_1$ at 85 23
\pinlabel $-p$ at  113 24
\pinlabel $L_0$ at  142 23
\endlabellist     
\centering
\includegraphics[scale=1.5]{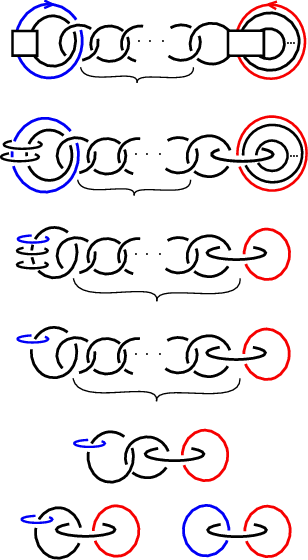}
\caption{Kirby moves for (c3).}
\label{figure:kirby-c3}
\end{figure}

\begin{figure}[h]
\labellist
\tiny\hair 2pt
\pinlabel $L_1$ [br] at 8 125
\pinlabel $-p-2$ at 7.2 110
\pinlabel $-p-2$ at 21 119
\pinlabel $-1$ at 39 110
\pinlabel $0/0$ [t] at 55 91
\pinlabel $L_0$ [r] at 61 110
\pinlabel $L_1$ [br] at 99 125
\pinlabel $p+2$ [r] at 84.5 112
\pinlabel $\underbrace{1/1\cdots/1}_{p+2}$ [t] at 96 93
\pinlabel $1$ [b] at 112 101
\pinlabel $1$ [t] at 130 107
\pinlabel $1/1$ [t] at 150 91
\pinlabel $L_0$ [r] at 154 110
\pinlabel $L_1$ [br] at 16 66
\pinlabel $p+2$ [r] at 8 53
\pinlabel $\underbrace{1/1\cdots/1}_{p+2}$ [t] at 19 40
\pinlabel $1$ [bl] at 38 67
\pinlabel $1$ [t] at 47 50
\pinlabel $1/1$ [t] at 69 36
\pinlabel $L_0$ [r] at 72 55
\pinlabel $L_1$ [br] at 103 65
\pinlabel $-(p+1)$ [t] at 119 44
\pinlabel $-1$ [t] at 136 52
\pinlabel $L_0$ [bl] at 157 61
\pinlabel $L_1$ [br] at 65 21
\pinlabel $L_0$ [bl] at 115 21
\pinlabel $-p$ [t] at 90 11
\endlabellist
\centering
\includegraphics[scale=2]{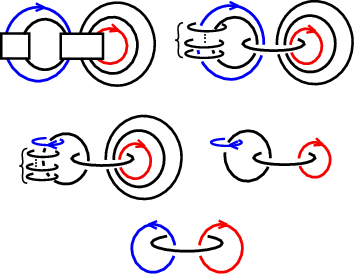}
\caption{Kirby moves for (d1).}
\label{figure:kirby-d1}
\end{figure}

\begin{figure}
\labellist
\tiny\hair 2pt
\pinlabel $L_1$ [br] at 13 265
\pinlabel $-1$ at 23 257
\pinlabel $-2$ at 11 248
\pinlabel $k$ [t] at 66 228
\pinlabel $-2$ at 40 260
\pinlabel $-2$ at 51 260
\pinlabel $-2$ at 83 260
\pinlabel $-p-3$ at 100 261
\pinlabel $-1$ at 117 248
\pinlabel $0/0$ [t] at 130 231
\pinlabel $L_0$ [bl] at 139 265
\pinlabel $L_1$ [br] at 10 209
\pinlabel $1$ at 24 203
\pinlabel $1$ at -3 193
\pinlabel $1$ at -3 200
\pinlabel $-2$ at 38 206
\pinlabel $-2$ at 53 206
\pinlabel $k$ [t] at 64 173
\pinlabel $1$ at 112 186
\pinlabel $-2$ at 83 207
\pinlabel $-p-2$ at 100 207
\pinlabel $1/1$ [t] at 130 177
\pinlabel $L_0$ [bl] at 142 209
\pinlabel $L_1$ [br] at 9 155
\pinlabel $1$ at 20 162
\pinlabel $1$ at 4 150
\pinlabel $1$ at 4 142
\pinlabel $-2$ at 38 158
\pinlabel $-2$ at 50 158
\pinlabel $k$ [t] at 75 124
\pinlabel $-p-1$ at 113 158
\pinlabel $-2$ at 83 159
\pinlabel $-2$ at 94 159
\pinlabel $L_0$ [bl] at 136 155
\pinlabel $L_1$ [br] at 9 107
\pinlabel $-1$ at 23 115
\pinlabel $-2$ at 36 110
\pinlabel $-2$ at 50 110
\pinlabel $k$ [t] at 75 76
\pinlabel $-2$ at 83 111
\pinlabel $-2$ at 95 111
\pinlabel $-p-1$ at 111 110
\pinlabel $L_0$ [bl] at 136 107
\pinlabel $L_1$ [br] at 36 57
\pinlabel $-1$ at 50 65
\pinlabel $-2$ at 67 63
\pinlabel $-p-1$ at 82 62
\pinlabel $L_0$ [bl] at 108 57
\pinlabel $L_1$ at 5 20
\pinlabel $-1$ at 23 28
\pinlabel $-p-1$ at 41 27
\pinlabel $L_0$ at 70 20 
\pinlabel $L_1$ at 85 24
\pinlabel $L_0$ at 142 24
\pinlabel $-p$ at 113 25   
\endlabellist
\centering
\includegraphics[scale=1.5]{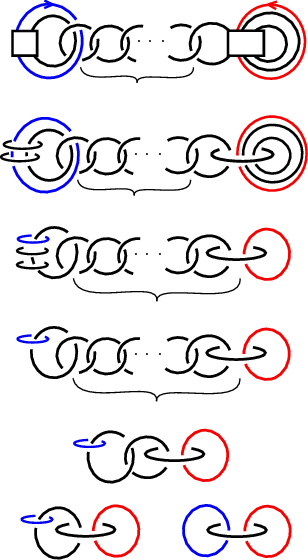}
\caption{Kirby moves for (d2).}
\label{figure:kirby-d2}
\end{figure}

\begin{figure}
\labellist
\tiny\hair 2pt
\pinlabel $L_1$ [br] at 7 214
\pinlabel $-2$ at 7 200
\pinlabel $-1$ at 20 209
\pinlabel $k_1$ [t] at 61 180
\pinlabel $k_0$ [t] at 129 180
\pinlabel $-2$ at 36 212
\pinlabel $-2$ at 49 212
\pinlabel $-2$ at 80 212
\pinlabel $-p-2$ at 97 212
\pinlabel $-2$ at 112 212
\pinlabel $-2$ at 144 213
\pinlabel $-1$ at 165 207
\pinlabel $-2$ at 179 197.5
\pinlabel $L_0$ [bl] at 177 214
\pinlabel $L_1$ [br] at 10 163
\pinlabel $1$ at 24 158
\pinlabel $1$ at -3 148
\pinlabel $1$ at -3 155
\pinlabel $k_1$ [t] at 61 129
\pinlabel $k_0$ [t] at 126 129
\pinlabel $-2$ at 37 160
\pinlabel $-2$ at 49 160
\pinlabel $-2$ at 80 160
\pinlabel $-p-2$ at 97 160
\pinlabel $-2$ at 112 160
\pinlabel $-2$ at 144 160
\pinlabel $1$ at 160 154
\pinlabel $L_0$ [bl] at 176 160
\pinlabel $1$ at 189 145
\pinlabel $1$ at 189 151
\pinlabel $L_1$ [br] at 7 107
\pinlabel $1$ at 24 114   
\pinlabel $1$ [r] at 6 100
\pinlabel $1$ [r] at 6 93
\pinlabel $k_1$ [t] at 61 79
\pinlabel $k_0$ [t] at 126 79
\pinlabel $-2$ at 36 110
\pinlabel $-2$ at 49 110
\pinlabel $-2$ at 80 110
\pinlabel $-p-2$ at 97 110
\pinlabel $-2$ at 112 110
\pinlabel $-2$ at 144 110
\pinlabel $1$ at 160 114
\pinlabel $L_0$ [bl] at 180 107
\pinlabel $1$ at 184 100
\pinlabel $1$ at 184 93
\pinlabel $L_1$ [br] at 9 62
\pinlabel $-1$ at 24 70
\pinlabel $k_1$ [t] at 62 35
\pinlabel $k_0$ [t] at 127 35
\pinlabel $-2$ at 36 66
\pinlabel $-2$ at 49 66
\pinlabel $-2$ at 80 66
\pinlabel $-p-2$ at 97 66
\pinlabel $-2$ at 112 66
\pinlabel $-2$ at 144 66
\pinlabel $L_0$ [bl] at 181 62
\pinlabel $-1$ at 160 70
\pinlabel $L_1$ [br] at 74 21 
\pinlabel $L_0$ [bl] at 122 21
\pinlabel $-p$ [t] at 97 10
\endlabellist
\centering
\includegraphics[scale=1.5]{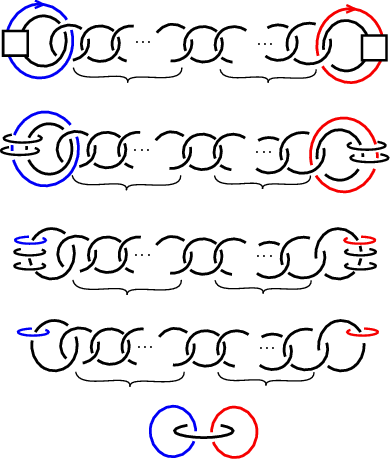}
\caption{Kirby moves for (d3).}
\label{figure:kirby-d3}
\end{figure}

\begin{figure}[h]
\labellist
\tiny\hair 2pt
\pinlabel $L_0$ [br] at 3 249
\pinlabel $-p-1$ [t] at 28 234
\pinlabel $k$ [t] at 68 218
\pinlabel $-2$ at 38 254
\pinlabel $-2$ at 51 254
\pinlabel $-2$ at 86 254
\pinlabel $-2$ at 100 254
\pinlabel $-1$ at 120 248
\pinlabel $-2$ at 134 239
\pinlabel $L_1$ [bl] at 133 255
\pinlabel $L_0$ [br] at 6 203
\pinlabel $-p-1$ [t] at 29 186
\pinlabel $k$ [t] at 70 170
\pinlabel $-2$ at 38 205
\pinlabel $-2$ at 51 205
\pinlabel $-2$ at 86 205
\pinlabel $-2$ at 100 205
\pinlabel $1$ [t] at 120 206
\pinlabel $L_1$ [bl] at 134 209
\pinlabel $1$ [l] at 146 200 
\pinlabel $1$ [l] at 146 194
\pinlabel $L_0$ [br] at 4 154
\pinlabel $-p-1$ [t] at 26 137
\pinlabel $k$ [t] at 67 122
\pinlabel $-2$ at 38 156
\pinlabel $-2$ at 51 156
\pinlabel $-2$ at 85 156
\pinlabel $-2$ at 100 156
\pinlabel $1$ [t] at 114 156
\pinlabel $L_1$ [bl] at 136 153
\pinlabel $1$ [l] at 136 147
\pinlabel $1$ [l] at 136 141
\pinlabel $L_0$ [br] at 10 104
\pinlabel $-p-1$ [t] at 33 89
\pinlabel $k$ [t] at 74 72
\pinlabel $-2$ at 40 107
\pinlabel $-2$ at 56 107
\pinlabel $-2$ at 90 107
\pinlabel $-2$ at 105 107
\pinlabel $-1$ [tl] at 132 89
\pinlabel $L_1$ [bl] at 142 104
\pinlabel $L_0$ [br] at 48 59
\pinlabel $L_1$ [bl] at 103 59
\pinlabel $-p-1$ [t] at 71 43
\pinlabel $-1$ [tl] at 94 45
\pinlabel $L_0$ [br] at 48 22
\pinlabel $-p$ [t] at 71 9
\pinlabel $L_1$ [bl] at 95 22
\endlabellist
\centering
\includegraphics[scale=1.5]{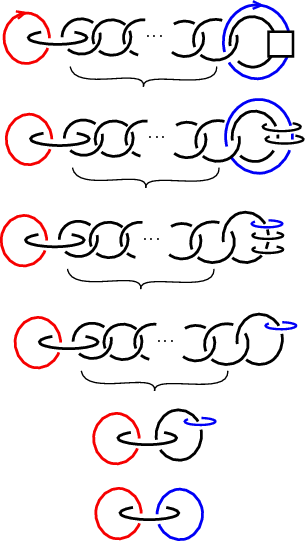}
\caption{Kirby moves for (e2).}
\label{figure:kirby-e2}
\end{figure}
\subsection{Legendrian realisations} 
\subsubsection{Case (b)}
The $|\ttt_1-1|$ realisations with $\ttt_0=0$ and $\ttt_1\leq 0$
are shown in Figure~\ref{figure:case-b}. Here $k$ and $\ell$ denote the
number of stabilisations, with $k+\ell=-\ttt_1$, so that
$\tb_1=\ttt_1-1$.
 
\begin{figure}[h]
\labellist
\small\hair 2pt
\pinlabel $L_1$ [bl] at 57 110
\pinlabel $k$ [r] at 1 86
\pinlabel $\ell$ [l] at 71 86
\pinlabel $+1$ [r] at 0 59
\pinlabel $+1$ [r] at 0 51
\pinlabel $+1$ [r] at 0 36
\pinlabel $+1$ [r] at 0 29
\pinlabel $p+1$ [l] at 77 46
\pinlabel $L_0$ [tl] at 53 12
\endlabellist
\centering
\includegraphics[scale=1.1]{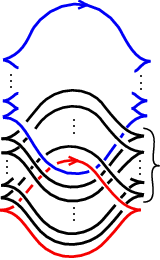}
\caption{$|\ttt_1-1| $ Legendrian realisations for case (b).}
\label{figure:case-b}
\end{figure}

The linking matrix $M$ of the surgery diagram is the
$((p+1)\times (p+1))$-matrix
\[ \begin{pmatrix} 
0      & -1     & -1     & \cdots & -1\\
-1     & 0      & -1     & \cdots & -1\\
-1     & -1     & 0      & \cdots & -1\\
\vdots & \vdots & \vdots & \ddots & \vdots\\
-1     & -1     & -1     & \cdots & 0
\end{pmatrix}. \]
The determinant of this matrix is $\det M=-p$.
The extended linking matrix for $L_i$, $i=0,1$, is the
$((p+2)\times (p+2))$-matrix built in the same fashion as~$M$, so that
$\det(M_i) =-p-1$.

It follows that 
\[ \tb_{\Q}(L_0)=-1+\frac{p+1}{p}=\frac{1}{p}\]
and
\[ \tb_{\Q}(L_1)=\ttt_1-1+\frac{p+1}{p}=\ttt_1+\frac{1}{p}.\]
Since $\underline{\rot}=\underline{0}$, we get
$\rot_{\Q}(L_0)=0$ and
\[ \rot_{\Q}(L_1)\in \{\ttt_1, \ttt_1+2,\ldots,-\ttt_1-2,-\ttt_1\}.\]
 
For the calculation of $d_3$, we observe that $c^2=0$, $\chi=p+2$,
and $\sigma=p-1$. Thus, 
\[d_3=\frac{1}{4}(0-2(p+2)-3(p-1))+p+1=\frac{1}{4}(-5p-1)+p+1=\frac{3-p}{4}.\]
 
By Theorem~\ref{thm:geon15}, there are no exceptional realisations
of a rational unknot in $L(p,1)$ with $\tb_{\Q}$ equal to that of~$L_1$,
so $L_1$ is loose. The component $L_0$ is exceptional, as can
be seen by performing surgery on it. A contact $(-\frac{1}{p+2})$-surgery
on $L_0$ has the same effect as taking $p+2$ Legendrian push-offs of $L_0$
and doing a $(-1)$-surgery on each of them. This cancels the
$(+1)$-surgeries in the diagram and hence produces a tight contact
$3$-manifold.
\subsubsection{Case (c2)}
In this case we have exactly three Legendrian realisations.
The left-hand side of Figure \ref{figure:case-c2}, where the
rational rotation numbers of $L_0$ and $L_1$ will be shown to be
non-zero, gives two realisations (one with $L_0,L_1$ oriented as shown,
the second with orientations flipped simultaneously.
The right-hand side, where the rotation numbers are
zero, gives the third one.

\begin{figure}[h]
 \labellist
\small\hair 2pt
\pinlabel $L_1$ [bl] at 58 99
\pinlabel $+1$ [l] at 73 80
\pinlabel $+1$ [r] at 2 56
\pinlabel $+1$ [r] at 2 49
\pinlabel $+1$ [r] at 2 35
\pinlabel $+1$ [r] at 2 28
\pinlabel $p+2$ [l] at 78 44
\pinlabel $L_0$ [tl] at 58 15
\pinlabel $L_1$ [bl] at 172 99
\pinlabel $+1$ [l] at 189 80
\pinlabel $+1$ [r] at 114 56 
\pinlabel $+1$ [r] at 114 49
\pinlabel $+1$ [r] at 114 35
\pinlabel $+1$ [r] at 114 28
\pinlabel $p+2$ [l] at 190 44
\pinlabel $L_0$ [tl] at 172 15
\endlabellist
\centering
\includegraphics[scale=1.1]{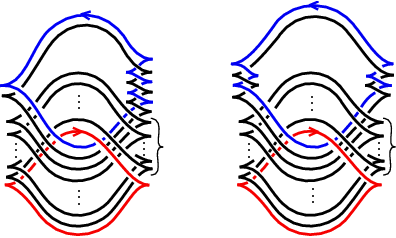}
\caption{Three Legendrian realisations for case (c2).}
\label{figure:case-c2}
\end{figure}

We begin with the left-hand side.
The linking matrix for the surgery diagram, ordering the surgery knots
from top to bottom, and with all surgery knots oriented clockwise,
is the $((p+3)\times (p+3))$-matrix
\[ M=\begin{pmatrix} 
-2 & -1 & -1 & \cdots & -1\\
-1 & 0  & -1 & \cdots & -1\\
-1 & -1 & 0  & \cdots & -1\\
\vdots & \vdots & \vdots & \ddots & \vdots\\
-1 & -1 & -1 & \cdots & 0
\end{pmatrix},\]
with $\det{M}=p$.
 
The extended linking matrices for $L_0$ and $L_1$ are
\[ M_0= \left(\begin{array}{c|cccc}
0      & -1 & -1 & \cdots & -1\\ \hline
-1     &    &    &        &   \\
-1     &    & M  &        &   \\
\vdots &    &    &        &   \\
-1     &    &    &        &   
\end{array}\right),\]
with $\det{M_0}=1+p$, and
\[ M_1= \left(\begin{array}{c|cccc} 
0      & 3 & 1 & \cdots & 1\\ \hline
3      &   &   &        &  \\
1      &   & M &        &  \\
\vdots &   &   &        &  \\
1      &   &   &        &
\end{array}\right),\]
with $\det{M_1}=1+5p$. This yields
\[ \tb_{\Q}(L_0)=-1+\frac{1+p}{p}=\frac{1}{p}\;\;\;\text{and}\;\;\;
\tb_{\Q}(L_1)=-3+\frac{1+5p}{p}=2+\frac{1}{p},\] 
so we are indeed in the case $(\ttt_0,\ttt_1)=(0,2)$.
Further, 
\[ \underline{\rot}=(2,\underbrace{0,0,\ldots, 0}_{p+2}),\;\;
\vlk_0=(-1,\underbrace{-1,\ldots, -1}_{p+2}),\;\;
\vlk_1= (3,\underbrace{1,\ldots,1}_{p+2}).\]
A simple calculation gives
\[ M^{-1}\vlk_0=(-1/p,1/p,\ldots,1/p)\;\;\;\text{and}\;\;\;
M^{-1}\vlk_1=(-(2p+1)/p,1/p,\ldots,1/p).\]
Hence
\[ \rot_{\Q}(L_0)=0-2\cdot\frac{-1}{p}=\frac{2}{p}\;\;\;\text{and}\;\;\;
\rot_{\Q}(L_1)=-2+2\cdot\frac{2p+1}{p}=2+\frac{2}{p}.\]
The second realisation is given by reversing the orientations of
both $L_0$ and $L_1$; this changes the sign of the $\rot_{\Q}(L_i)$
(as is best seen by also changing the orientations of the surgery curves,
although the choice of orientation here does not matter).

For the diagram on the right, the calculation of $\tb_{\Q}(L_i)$
does not change, but now we have $\vrot=\underline{0}$ and $\rot_1=0$,
whence $\rot_{\Q}(L_0)=\rot_{\Q}(L_1)=0$. This yields the third realisation.

For the calculation of the $d_3$-invariant we observe that both diagrams
in Figure~\ref{figure:case-c2} give $\sigma=p-1$ and $\chi=p+4$.
The diagram on the right has $\vrot=\underline{0}$, which yields
$d_3=(7-p)/4$. For the diagram on the left, 
the solution $\bfx$ of $M\bfx=\vrot$ is $\bfx=(-2-2/p,2/p,\ldots,2/p)^{\ttt}$.
Then $c^2=\langle\bfx,\vrot\rangle=-2(2+2/p)$ and $d_3=
(3p-p^2-4)/4p$.

In either case, the contact structure is overtwisted by the observations in
Section~\ref{section:detecting}.
A contact $(-1)$-surgery along $L_1$ and a contact $(-\frac{1}{p+2})$-surgery
along $L_0$ cancels all $(+1)$-surgeries, so the link is exceptional.
Both components are loose, since the classical invariants do not
match those of Theorem~\ref{thm:geon15}.
\subsubsection*{Case (c3)}
Two of the four Legendrian realisations in this case are shown in
Figure~\ref{figure:case-c3}, with $L_0$ and $L_1$ both oriented
clockwise or counter-clockwise for $k$ even, $L_1$ having the opposite
orientation of $L_0$ for $k$ odd; cf.\ Figure~\ref{figure:kirby-c3}.
The other two are obtained by
flipping the shark with surgery coefficient $-1$.
The invariants are shown in
Table~\ref{table:c3}. We omit the calculations of the invariants
in this and the remaining cases. The arithmetic is lengthy but
not inspiring; the detailed calculations are available from
the authors upon request.

\begin{figure}[h]
\labellist
\small\hair 2pt
\pinlabel $L_1$ [bl] at 43 196
\pinlabel $+1$ [r] at 0 178
\pinlabel $\ttt_1-3=k$ [l] at 65 119
\pinlabel $-1$ [r] at 3 153
\pinlabel $-1$ [r] at 3 96
\pinlabel $-1$ [r] at 3 82
\pinlabel $-1$ [r] at 0 52
\pinlabel $+1$ [r] at 5 39
\pinlabel $+1$ [r] at 5 33
\pinlabel $+1$ [r] at 5 25
\pinlabel $+1$ [r] at 5 19
\pinlabel $p+2$ at 69 29
\pinlabel $L_0$ [tl] at 43 8
\endlabellist
\centering
\includegraphics[scale=1.1]{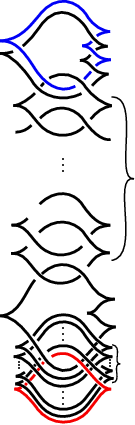}
\caption{Legendrian realisations for case (c3).}
\label{figure:case-c3}
\end{figure}

\begin{table}
{\renewcommand{\arraystretch}{1.5}
\begin{tabular}{|c|c|c|c|c|} \hline
$\ttt_0$ & $\rot_{\Q}(L_0)$ & $\ttt_1$ & $\rot_{\Q}(L_1)$ & $d_3$ \\ 
\hline\hline
$0$ & $0$ & $> 2$ & $\pm(\ttt_1-2)$  & $\frac{7-p}{4}$\\[.5mm] 
\hline
$0$ & $\pm\frac{2}{p}$ & $> 2$ & $\pm(\ttt_1+\frac{2}{p})$  &
  $\frac{3p-p^2-4}{4p}$ \\[.5mm] 
\hline
\end{tabular}
} % end arraystretch
\vspace{1.5mm}
\caption{Invariants for case (c3).}
\label{table:c3}
\end{table}

The contact structure is overtwisted, because we have rational unknots
with $\ttt_i\geq 0$. A contact $(-1)$-surgery on $L_1$ and a contact
$(-\frac{1}{p+2})$-surgery on $L_0$ will cancel all contact $(+1)$-surgeries,
so the link is exceptional.

Except for $(\ttt_0,\rot_{\Q}(L_0))=(0,0)$,
the invariants of $L_0$ and $L_1$ are not realised by
exceptional rational unknots, see Theorem~\ref{thm:geon15} or,
for a better overview, \cite[Table~2]{geon15}; hence these knots are loose.
So is $L_0$ in the first line of Table~\ref{table:c3}, since
the value of the $d_3$-invariant does not match that of an exceptional
realisation; again see \cite[Table~2]{geon15}. This is the first
instance where we need the $d_3$-invariant to detect looseness.
\subsubsection{Case (d1)}
The $p+3$ Legendrian realisations with invariants as listed
in Theorem~\ref{thm:main} are shown in Figure~\ref{figure:case-d1}.
The surgery knot of which $L_1$ is a push-off is a $(p+2)$-fold
stabilisation of the standard Legendrian unknot (compare with
the topological surgery framing in Figure~\ref{figure:kirby-d1});
placing the stabilisations left or right gives the $p+3$ choices.
Flipping the orientations of both $L_0$ and $L_1$ has the same
effect as exchanging the number of stabilisations on the left-
and right-hand side, so this does not give any new choices.

\begin{figure}[h]
\labellist
\small\hair 2pt
\pinlabel $L_1$ [bl] at 55 95
\pinlabel $+1$ [r] at 0 78
\pinlabel $+1$ [r] at 0 37
\pinlabel $+1$ [r] at 0 28
\pinlabel $L_0$ [tl] at 55 14
\endlabellist
\centering
\includegraphics[scale=1]{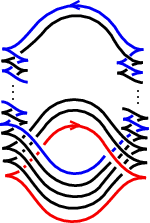}
\caption{$p+3$ Legendrian realisations for case (d1).}
\label{figure:case-d1}
\end{figure}

The argument for the link being exceptional is as in the preceding case.
According to \cite[Table~2]{geon15},
the classical invariants are realised by exceptional rational unknots,
but only in the overtwisted contact structure with $d_3=(3p-\ttr^2)/4p$,
so the link components here are loose.
\subsubsection{Case (d2)}
The $2(p+2)$ Legendrian realisations are shown in Figure~\ref{figure:case-d2},
with $L_0,L_1$ given the same orientation for $k$ even, the opposite, for
$k$ odd. The factor $2$ comes from simultaneously exchanging
the orientations of $L_0$ and~$L_1$.
The factor $p+2$ comes from the placement of $p+1$
stabilisations on the surgery knot near the bottom of the diagram
(compare with Figure~\ref{figure:kirby-d2}). We write
\[ \ttr\in\{-p-1,-p+1,\ldots, p-1,p+1\}\]
for the $p+2$ possible rotation numbers of this surgery knot.
The invariants are listed in Table~\ref{table:d2}.
The remaining arguments are as before.

\begin{figure}[h]
\labellist
\small\hair 2pt
\pinlabel $L_1$ [bl] at 54 214
\pinlabel $+1$ [l] at 65 199
\pinlabel $-1$ [l] at 65 170
\pinlabel $-1$ [l] at 65 113
\pinlabel $-1$ [l] at 65 100
\pinlabel $-1$ [l] at 78 74
\pinlabel $+1$ [l] at 78 37
\pinlabel $+1$ [l] at 78 29
\pinlabel $k=\ttt_1-2$ [r] at 0 135
\pinlabel $L_0$ [tl] at 54 6
\endlabellist
\centering
\includegraphics[scale=1]{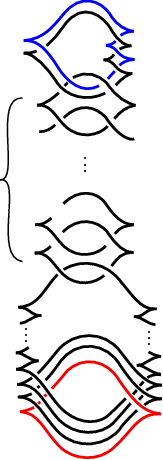}
\caption{$2(p+2)$ Legendrian realisations for case (d2).}
\label{figure:case-d2}
\end{figure}

\begin{table}
{\renewcommand{\arraystretch}{1.5}
\begin{tabular}{|c|c|c|c|c|} \hline
$\ttt_0$ & $\rot_{\Q}(L_0)$ & $\ttt_1$ & $\rot_{\Q}(L_1)$ & $d_3$ \\
\hline\hline
$1$ & $\pm\frac{1-\ttr}{p}$ & $>1$ &
$\pm(\ttt_1-1+\frac{1-\ttr}{p})$ &
$\frac{7p-1+2\ttr-\ttr^2}{4p}$ \\[.5mm] 
\hline
\end{tabular}
} % end arraystretch
\vspace{1.5mm}
\caption{Invariants for case (d2).}
\label{table:d2}
\end{table}
\subsubsection{Case (d3)}
The $4(p+1)$ Legendrian realisations are shown in Figure~\ref{figure:case-d3}.
For $k_0,k_1$ of equal parity, we give $L_0$ and $L_1$ the same
orientation; for $k_0,k_1$ of opposite parity, the opposite
orientation. A factor $2$ comes from exchanging the orientations
simultaneously, a factor $2$ from the two diagrams, and a factor $p+1$
from placing $p$ stabilisations left or right on the surgery knot
at the centre of each diagram, cf.\ Figure~\ref{figure:kirby-d3}.
The rotation number of this surgery knot is denoted by
\[ \ttr\in\{ -p, -p+2,\ldots, p-2,p\}.\]
The invariants are shown in Table~\ref{table:d3}.
As before, suitable contact surgeries along $L_0,L_1$
and comparison with \cite{geon15} show that the link
is exceptional and the components loose.

\begin{figure}[h]
\labellist
\small\hair 2pt
\pinlabel $L_1$ [bl] at 48 314
\pinlabel $+1$ [l] at 60 284 
\pinlabel $-1$ [l] at 60 270
\pinlabel $k_1=\ttt_1-2$ [r] at 0 234
\pinlabel $-1$ [l] at 60 212
\pinlabel $-1$ [l] at 60 199
\pinlabel $-1$ [bl] at 56 181
\pinlabel $-1$ [l] at 62 123
\pinlabel $k_0=\ttt_0-2$ [r] at 0 88
\pinlabel $-1$ [l] at 62 67
\pinlabel $-1$ [l] at 62 53
\pinlabel $L_0$ [bl] at 24 16
\pinlabel $+1$ [l] at 60 13
\pinlabel $L_1$ [bl] at 145 314
\pinlabel $+1$ [r] at 110 284
\pinlabel $-1$ [r] at 110 270
\pinlabel $\ttt_1-2=k_1$ [l] at 175 234
\pinlabel $-1$ [r] at 110 212
\pinlabel $-1$ [r] at 110 199
\pinlabel $-1$ [br] at 118 181
\pinlabel $-1$ [r] at 112 123
\pinlabel $\ttt_0-2=k_0$ [l] at 175 88
\pinlabel $-1$ [r] at 112 67
\pinlabel $-1$ [r] at 112 53
\pinlabel $L_0$ [bl] at 127 16
\pinlabel $+1$ [r] at 107 23
\endlabellist
\centering
\includegraphics[scale=1]{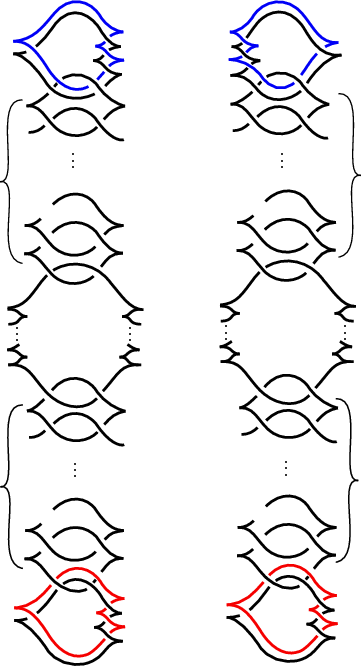}
\caption{$4(p+1)$ Legendrian realisations for case (d3).}
\label{figure:case-d3}
\end{figure}

\begin{table}
{\renewcommand{\arraystretch}{1.5}
\begin{tabular}{|c|c|c|c|c|} \hline
$\ttt_0$ & $\rot_{\Q}(L_0)$ & $\ttt_1$ & $\rot_{\Q}(L_1)$ & $d_3$ \\
\hline\hline
$>1$ & $\pm(\ttt_0-1+\frac{2-\ttr}{p})$ & $>1$ &
     $\pm(\ttt_1-1+\frac{2-\ttr}{p})$ & $\frac{7p-4+4\ttr-\ttr^2}{4p}$\\[.5mm] 
\hline
$>1$ & $\pm(\ttt_0-1+\frac{\ttr}{p})$   & $>1$ &
     $\mp(\ttt_1-1-\frac{\ttr}{p})$   & $\frac{7p-\ttr^2}{4p}$\\[.5mm]
\hline
\end{tabular}
} % end arraystretch
\vspace{1.5mm}
\caption{Invariants for case (d3).}
\label{table:d3}
\end{table}
\subsubsection{Case (e1)}
Figure~\ref{figure:case-e1} shows $|\tb_0|(p+1)$ Legendrian realisations.
Placing the $|\tb_0|-1$ stabilisations left or right gives $|\tb_0|$ choices.
The surgery knot is the standard Legendrian unknot with $p$
stabilisations, which gives $p+1$ choices. Flipping the orientations
of $L_0,L_1$ simultaneously is the same as flipping the whole
diagram (and the left/right stabilisations), so this does not
give any additional realisations. Write
\[ \ttr\in\{-p,-p+2,\ldots, p-2,p\}\]
for the rotation number of the surgery knot. A straightforward calculation
then gives $\ttt_0=\tb_0$ (and hence the right number
of realisations) and the other classical invariants as listed
in Theorem~\ref{thm:main}, with $\ttr_0:=\rot_0$.
The $d_3$-invariant takes the value $(3p-\ttr^2)/4p$.
Since $\ttt_1>0$, the contact structure is overtwisted.
A contact $(-1)$-surgery on $L_1$ gives $(S^3,\xist)$, so $L_1$ is
exceptional. On the other hand, $L_0$ must be loose, since $\ttt_0<0$.

\begin{figure}[h]
\labellist
\small\hair 2pt
\pinlabel $L_1$ [bl] at 56 123
\pinlabel $+1$ [r] at 0 73
\pinlabel $L_0$ [tl] at 56 11
\endlabellist
\centering
\includegraphics[scale=1.1]{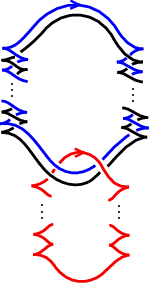}
\caption{$|\ttt_0|(p+1)$ Legendrian realisations for case (e1).}
\label{figure:case-e1}
\end{figure}
\subsubsection{Case (e2)}
\label{subsubsection:e2}
The $2|\ttt_0|p$ realisations for this case are shown in
Figure~\ref{figure:case-e2}, with $L_0$ and $L_1$ both oriented
clockwise or counter-clockwise for $k$ odd, $L_1$ having the opposite
orientation of $L_0$ for $k$ even; cf.\ Figure~\ref{figure:kirby-e2}.
Here $L_0$ is an unknot with
$\tb_0<0$ (which will again turn out to equal~$\ttt_0$);
this gives us $|\ttt_0|$ choices distinguished by
\[ \ttr_0:=\rot_0\in\{\ttt_0+1,\ttt_0+3,\ldots,-\ttt_0-3,-\ttt_0-1\}.\]
The topmost surgery knot has Thurston--Bennequin invariant $-p$;
this gives us $p$ choices distinguished by the rotation number
\[ \ttr\in\{-p+1,-p+3,\ldots,p-3,p-1\}.\]
The factor $2$ in the number of choices comes from simultaneously
flipping the orientations of $L_0$ and $L_1$.

One then computes that
\[ \rot_{\Q}(L_0)=\pm\Bigl(\ttr_0+\frac{\ttr+1}{p}\Bigr),\;\;\;
\rot_{\Q}(L_1)=\pm\Bigl(\ttt_1-1+\frac{\ttr+1}{p}\Bigr),\]
and
\[ d_3=\frac{1}{4}\Bigl(3+\frac{2\ttr-\ttr^2-1}{p}\Bigr).\]
The argument for showing that $L_0$ is loose, and $L_1$ exceptional,
is as in case (e1).

\begin{figure}[h]
\labellist
\small\hair 2pt
\pinlabel $L_0$ [bl] at 52 244
\pinlabel $-1$ [br] at 12 180
\pinlabel $\ttt_1-2=k$ [l] at 67 88
\pinlabel $-1$ [r] at 9 122
\pinlabel $-1$ [r] at 9 65
\pinlabel $-1$ [r] at 9 52
\pinlabel $L_1$ [bl] at 21 15
\pinlabel $+1$ [tr] at 16 12
\endlabellist
\centering
\includegraphics[scale=1]{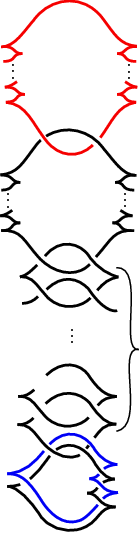}
\caption{$2|\ttt_0|p$ Legendrian realisations for case (e2).}
\label{figure:case-e2}
\end{figure}

\end{document}